\title[ ]{  Noncompact complete Riemannian manifolds with dense eigenvalues embedded in the
essential spectrum of the Laplacian
 }
\author{ Svetlana Jitomirskaya}
\address[ Svetlana Jitomirskaya]{ Department of Mathematics, University of California, Irvine, California 92697-3875, USA}
\email{szhitomi@math.uci.edu}
\author{Wencai Liu}
\address[Wencai Liu]{Department of Mathematics, University of California, Irvine, California 92697-3875, USA}\email{liuwencai1226@gmail.com}
\theoremstyle{plain}
\newtheorem{theorem}{Theorem}[section]
\newtheorem{corollary}[theorem]{Corollary}
\newcommand{\R}{\mathbb{R}}
\newcommand{\Z}{\mathbb{Z}}
\theoremstyle{definition}
\newtheorem{remark}[theorem]{Remark}
\begin{document}


\begin{abstract}
We prove sharp criteria on the behavior of
radial curvature for the existence of  asymptotically flat or hyperbolic Riemannian manifolds with prescribed sets of
eigenvalues embedded in the spectrum of the Laplacian. In particular,
we construct such manifolds with dense embedded point spectrum and
sharp curvature bounds.

\end{abstract}
\maketitle
\section{Introduction and main results}
Let $(M_n, g)$ be an $n$-dimensional noncompact
complete Riemannian manifold. The Laplace-Beltrami operator
$\Delta=\Delta_g$ on   $(M_n,g)$ is essentially self-adjoint on
$C^{\infty}_0(M_n)$.
We also denote by $\Delta$ its unique self-adjoint extension to
$L^2(M_n,dv_g)$.

For compact $M_n,$ there is a wealth of results on
 the relations between the
 geometry of the manifold and spectral properties of the Laplacian
 (which in this case only has discrete eigenvalues). These relations
 are not so well studied for the nocompact case, which is the subject
 of this paper. We will mention some results later in this article but
 refer the readers to \cite{donn} for a more complete review. The past
 work
 has been mostly focused on proofs of the purity of absolutely
 continuous spectrum and absence of embedded eigenvalues. Here we
 study the opposite question.

If $M$ has constant radial curvature $-K_0,$ then, for negative $K_0,$
$-\Delta$ has a complete set of eigenvalues $\{\lambda_n\geq 0\}$ with
$\lambda_n\to \infty$ \cite{spg}. For $K_0\geq 0$,
$\sigma(-\Delta)=\sigma_{{\rm
    ess}}(-\Delta)=\left[\frac{K_0}{4}(n-1)^2,\infty \right)$  and
there are no eigenvalues \cite{spg}.  For perturbations of the latter case
it is natural to expect that whether there are eigenvalues will depend on the size of the
perturbation. Perturbations on a compact set can only create
eigenvalues below the essential spectrum. Thus the question whether
one can embed eigenvalues in the essential spectrum will depend on the rate
 of approach of $-K_0$ by the radial curvature $K(r)$ of the
 perturbation, at infinity. Manifolds with $K(r)\to -K_0$ as
 $r\to\infty$ are called asymptotically flat if $K_0=0$ and
 asymptotically hyperbolic if $K_0=1.$ The case $K_0>0$ can be
 rescaled to $K_0=1$ but we find it more useful to keep $K_0$ and will
 call all manifolds with $K(r)\to -K_0,\; K_0>0,$ asymptotically hyperbolic.

In this paper we answer the following question. Given any finite
or countable (possibly dense) set $A\subset
\sigma(-\Delta)=\left[\frac{K_0}{4}(n-1)^2,\infty \right),$ can we
construct an asymptotically flat or hyperbolic $n$-dimensional manifold with an
embedded (in the absolutely continuous spectrum) eigenvalue at each $\lambda \in
A?$ How is it influenced by the asymptotical behavior of the radial curvature?

We prove

\begin{theorem}\label{1}
For any countable $A\subset
\left[\frac{K_0}{4}(n-1)^2,\infty \right)$ there exist
asymptotically flat and asymptotically hyperbolic $n$-dimensional Riemannian manifolds  with
$\sigma_{ac}(-\Delta)= \left[\frac{K_0}{4}(n-1)^2,\infty \right) $ and an
embedded eigenvalue at each $\lambda \in
A.$
\end{theorem}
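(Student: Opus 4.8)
The plan is to realize $(M_n,g)$ as a rotationally symmetric warped product, transfer the problem to a half-line Schr\"odinger operator, solve the resulting inverse spectral problem by a Wigner--von Neumann construction carried out along a sequence of moving windows, and then pass back to geometry.

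\textbf{Reduction.} Take $M_n=[0,\infty)\times S^{n-1}$ with $g=dr^2+f(r)^2\,d\Omega^2$, where $d\Omega^2$ is the round metric on $S^{n-1}$ and $f$ is smooth and positive on $(0,\infty)$ with $f(0)=0$, $f'(0)=1$ and $f^{(2k)}(0)=0$ for $k\ge1$ (so $g$ is smooth). Then the radial curvature is $K(r)=-f''(r)/f(r)$, and decomposing $L^2(M_n,dv_g)$ into spherical harmonics and applying the Liouville substitution $\psi=f^{(n-1)/2}u$ shows that $-\Delta$ is unitarily equivalent to $\bigoplus_{\ell\ge0}\big(-\tfrac{d^2}{dr^2}+V_\ell\big)$ on $L^2([0,\infty),dr)$, with
\[
V_\ell=\frac{n-1}{2}\,\mu'+\frac{(n-1)^{2}}{4}\,\mu^{2}+\frac{\ell(\ell+n-2)}{f^{2}},\qquad \mu:=\frac{f'}{f}
\]
(equivalently $\tfrac{(n-1)(n-3)}{4}(f'/f)^2+\tfrac{n-1}{2}f''/f+\ell(\ell+n-2)/f^2$). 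For $f_0(r)=\sinh(\sqrt{K_0}\,r)/\sqrt{K_0}$ (read as $f_0(r)=r$ when $K_0=0$) one gets constant curvature $-K_0$ and $V_\ell(r)\to\frac{K_0}{4}(n-1)^2$; since $1/f^2\to0$, the channels $\ell\ge1$ have the same essential and absolutely continuous spectrum as $\ell=0$, so it is enough to realize $A$ inside the $\ell=0$ channel. Writing $f=f_0e^h$ with $h$ smooth, vanishing near $0$, and $h',h''\to0$, one has $K(r)=-f''/f\to-K_0$ and $V_0-\tfrac{K_0}{4}(n-1)^2=W^{\mathrm{model}}+W$, where $W^{\mathrm{model}}$ is explicit (bounded away from $0$, decaying and in $L^2$ near $\infty$, with the usual $\sim\frac{(n-1)(n-3)}{4r^2}$ singularity at $0$) and $W$, produced by $h$ through a linear first-order ODE, is at our disposal. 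So it suffices to construct a smooth bounded $W:(0,\infty)\to\R$ with $W,W',W''\to0$ and $W\in L^2$ such that $H_0:=-\tfrac{d^2}{dr^2}+\tfrac{K_0}{4}(n-1)^2+W^{\mathrm{model}}+W$ has $\sigma_{ac}(H_0)=[\tfrac{K_0}{4}(n-1)^2,\infty)$ and, for every $\lambda\in A$, an $L^2$ solution at $\lambda$ satisfying the boundary condition at $0$.

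\textbf{The one-dimensional construction.} Enumerate $A=\{\lambda_1,\lambda_2,\dots\}$, put $k_j=(\lambda_j-\tfrac{K_0}{4}(n-1)^2)^{1/2}$, and assume first that all $k_j>0$. Build $W$ inductively on windows $I_m=[R_m,R_{m+1}]$, $R_m\uparrow\infty$, with $W\approx\frac{a_m}{r}\sin(2k_{\sigma(m)}r+\theta_m)$ on $I_m$, where $\sigma$ is a schedule serving each index infinitely often with only a bounded gap between consecutive appearances of a given index once it has entered. At the $m$-th step $W$ on $[0,R_m)$ is already fixed, hence so are the modified Pr\"ufer data at $R_m$ of the finitely many solutions being maintained; one then chooses the amplitude $a_m$, the length of $I_m$, and the phase $\theta_m$ (matched to the current Pr\"ufer angle of the $\lambda_{\sigma(m)}$-solution) so that on $I_m$: (i) the Pr\"ufer radius of the $\lambda_{\sigma(m)}$-solution is forced to decay by a definite factor, which can be made as small as we wish by enlarging $R_{m+1}/R_m$ and $a_m$ (this is exactly the Wigner--von Neumann resonance); (ii) the Pr\"ufer radius of every other maintained solution changes by a factor in $[1-2^{-m},1+2^{-m}]$ (off resonance it is governed by an oscillatory integral bounded by $a_m/R_m$ times the reciprocal of the minimal separation of the finitely many active frequencies, which we make small by taking $R_m$ large); and (iii) $\sup_{I_m}(|W|+|W'|+|W''|)\le2^{-m}$ and $\int_{I_m}W^2\,dr\le2^{-m}$. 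By the bounded-gap property of $\sigma$ together with (i), the solution at each $\lambda_i$ decays like a fixed negative power of $r$, hence lies in $L^2$, so $\lambda_i\in\sigma_p(H_0)$; by (iii), $W\in L^2$ with $W,W',W''\to0$. Comparing $H_0$ with the constant-curvature channel $H_{cc}=-\tfrac{d^2}{dr^2}+\tfrac{K_0}{4}(n-1)^2+W^{\mathrm{model}}$ (explicitly diagonalizable, with $\sigma(H_{cc})=\sigma_{ac}(H_{cc})=[\tfrac{K_0}{4}(n-1)^2,\infty)$), the bounded decaying perturbation $W$ preserves the essential spectrum, and being in $L^2$ preserves the a.c.\ spectrum by the Deift--Killip theorem; hence $\sigma_{ac}(H_0)=[\tfrac{K_0}{4}(n-1)^2,\infty)$ with no a.c.\ spectrum below. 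If $A$ contains the edge $\tfrac{K_0}{4}(n-1)^2$ (some $k_j=0$), add to everything a fixed background $\gamma/(1+r)^2$ with $\gamma>3/4$, whose edge-energy recessive solution already decays like $r^{(1-\sqrt{1+4\gamma})/2}\in L^2$, and design the potential on an initial segment so that the boundary-value solution at the edge is precisely this recessive one (the windows tuned to the other $\lambda_j$ disturb it only by summably small amounts).

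\textbf{Back to geometry, and the main difficulty.} From $W$, solving the Riccati equation $\tfrac{n-1}{2}\mu'+\tfrac{(n-1)^{2}}{4}\mu^{2}=\tfrac{K_0}{4}(n-1)^2+W^{\mathrm{model}}+W$ for $\mu=f'/f$ with $\mu\sim1/r$ at $0$ and $\mu\to\sqrt{K_0}$ at $\infty$ produces an admissible warping function: $f$ smooth, $f(0)=0$, $f'(0)=1$, $f>0$, and $K(r)=-f''/f\to-K_0$, so $(M_n,g)$ is asymptotically flat when $K_0=0$ and asymptotically hyperbolic when $K_0>0$; by construction $\sigma_{ac}(-\Delta)=[\tfrac{K_0}{4}(n-1)^2,\infty)$ and every $\lambda\in A$ is an embedded eigenvalue. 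The crux is the one-dimensional step: a single bounded potential tending to $0$ (with two derivatives, as the geometry requires) must force an $L^2$ solution at \emph{every} point of the possibly dense set $A$ --- hence sustain infinitely many resonances --- while keeping its a.c.\ spectrum full and purely absolutely continuous there; these demands conflict, since a resonance strong enough to square-integrabilise an eigenfunction ordinarily needs a non-integrable potential. The reconciliation is the moving-window structure: each $O(1/r)$ oscillatory block is individually so small that $W$ is still $L^2$ (whence the a.c.\ spectrum survives by Deift--Killip) and the curvature still tends to $-K_0$, yet the block tuned to a given energy recurs often enough that the accumulated resonant decay places that eigenfunction in $L^2$. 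Making all of these competing estimates close simultaneously along one induction is the technical heart of the proof.
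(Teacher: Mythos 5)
Your strategy is, at its core, the same as ours: a rotationally symmetric warped product, reduction to a half-line Schr\"odinger operator, and a potential assembled from consecutive Wigner--von Neumann blocks, each tuned to a single energy with its phase matched to the current Pr\"ufer data of that energy's solution, while the solutions at the other active energies are controlled by an off-resonance oscillatory estimate (our Theorem \ref{Twocase}). Two of your technical choices differ from ours. First, you prescribe the potential $W$ and recover the warping function by solving the Riccati equation for $\mu=f'/f$; this requires a global stability argument showing that $\mu$ exists for all $r$, keeps $f>0$, and tends to $\sqrt{K_0}$ with $\mu'\to0$ --- delicate when $K_0=0$, where the equilibrium is only polynomially stable and the forcing $W=O(1/r)$ is of the same order as $\mu^2$ and $\mu'$ themselves. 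We go the other way: we build $S(r)=f_1'/f_1$ directly as a piecewise Wigner--von Neumann function and check by direct computation that the induced potential $q$ is again of Wigner--von Neumann type (with a computable phase shift), which sidesteps the Riccati inversion entirely. Second, you keep $W\in L^2$ and invoke Deift--Killip for the a.c.\ spectrum, whereas we keep $|q-\frac{(n-1)^2}{4}K_0|=O(\ln r/r)$ and use Kiselev and Christ--Kiselev; both routes are viable.

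The one claim you must repair is the scheduling. For infinite $A$ no schedule can serve every index with a \emph{uniformly} bounded gap, so ``bounded gap'' can only mean a gap $G_i$ depending on the index $i$. Then the per-resonance decay factor $\delta_i$ acquired on the windows serving $\lambda_i$ must be chosen, index by index, to beat both the geometric growth of the window lengths and the growth factor accumulated over the $G_i-1$ intervening non-resonant windows (roughly $\delta_i<(R_{m+1}/R_m)^{-G_i/2}$ times the product of the off-resonance factors). This index-dependent bookkeeping is precisely the ``technical heart'' you point to; in our write-up it is the double induction together with the requirements $C_k\geq 4^{N(k)}$ and $C_k\geq N(k)^{100}$ in \eqref{DeCk}, which guarantee that the decay $C_{k+1}^{-99}$ gained on the single resonant segment of each super-block dominates the factor $2^{N(k+1)}$ picked up on the remaining $N(k+1)-1$ segments. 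With that repair --- and noting that, like our reformulated Theorems \ref{Thembedfinite} and \ref{Thembedcountable}, the construction really treats energies strictly above $\frac{K_0}{4}(n-1)^2$, your $\gamma/(1+r)^2$ device for the edge energy being only sketched --- your outline is correct and matches the structure of our proof.
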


In particular, this of course implies

\begin{corollary}
There exist asymptotically flat and asymptotically hyperbolic
$n$-dimensional  Riemannian manifolds  with dense point spectrum embedded in the absolutely
continuous spectrum.
\end{corollary}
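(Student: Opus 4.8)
We only sketch the proof of Theorem~\ref{1}; the corollary is then immediate, taking $A$ to be a countable dense subset of $\left[\tfrac{K_0}{4}(n-1)^2,\infty\right)$. Write $E_0=\tfrac{K_0}{4}(n-1)^2$. The plan is to produce the required manifolds among rotationally symmetric ones, $M_n=\RR^n$ with $g=dr^2+f(r)^2\,g_{S^{n-1}}$, where $f>0$ on $(0,\infty)$ extends to an odd smooth function with $f'(0)=1$ (so that $g$ is a smooth complete metric) and the radial curvature is $K(r)=-f''(r)/f(r)$. The first step is the classical one-dimensional reduction: on the (reducing) subspace of radial $L^{2}$ functions the substitution $u=f^{-(n-1)/2}\psi$ gives a unitary equivalence between the restriction of $-\Delta$ to that subspace and
\begin{equation*}
H=-\frac{d^{2}}{dr^{2}}+V(r),\qquad V=\frac{h''}{h},\qquad h:=f^{(n-1)/2},
\end{equation*}
on $L^{2}(0,\infty)$, with the boundary condition at $0$ coming from the regularity of $u$, namely $\psi$ recessive at $0$ ($\psi(r)\sim r^{(n-1)/2}$ as $r\to0$); hence an $L^{2}$ eigenfunction of $H$ recessive at $0$ produces an $L^{2}(M_n)$ eigenfunction of $-\Delta$ at the same energy.

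Comparing with the constant-curvature models $f_{0}(r)=r$ ($K_0=0$) and $f_{0}(r)=K_{0}^{-1/2}\sinh(\sqrt{K_{0}}\,r)$ ($K_0>0$), with potentials $V_{0}:=h_{0}''/h_{0}$ (smooth, tending to $E_0$), one gets from $h''=Vh$ that $K=-f''/f=\tfrac{2(n-3)}{(n-1)^{2}}(h'/h)^{2}-\tfrac{2}{n-1}V$; so as long as $V(r)\to E_{0}$ and $h\sim c\,h_{0}$ for some $c>0$ (which will hold for the $h$ we build), we get $K(r)\to-K_{0}$, i.e.\ the manifold is asymptotically flat (resp.\ hyperbolic), while $\sigma_{\mathrm{ess}}(-\Delta)=[E_{0},\infty)$ (the nonradial sectors only add $\mathrm{const}/f^{2}\to0$), so automatically $\sigma_{ac}(-\Delta)\subseteq[E_{0},\infty)$. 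It therefore suffices to build a smooth $V$ of this type for which $H$ has an $L^{2}$ eigenfunction recessive at $0$ at every point of a given enumeration $A=\{\lambda_{1},\lambda_{2},\dots\}$, with $\sigma_{ac}(H)=[E_{0},\infty)$, and then to exhibit this $V$ as $h''/h$ with $f=h^{2/(n-1)}$ an admissible metric.

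Constructing $V$ is the heart of the matter. Put $k_{j}=\sqrt{\lambda_{j}-E_{0}}\ge0$ and take $V=V_{0}+W$, where $W$ is smooth, vanishes on a neighbourhood $[0,R_{1}]$ of $0$, tends to $0$, and has the Wigner--von Neumann block form used by Naboko and by Simon to produce Schr\"odinger operators with dense embedded eigenvalues: $W=\sum_{n\ge1}W_{n}$, with $W_{n}$ supported on a block $[R_{n},R_{n+1}]$ and essentially equal to $-C_{n}\,r^{-\alpha}\sin(2k_{j(n)}r+\theta_{n})$ for a fixed $\alpha\in(\tfrac12,1)$, where $n\mapsto j(n)$ attains every value infinitely often, $\sum_{n}C_{n}<\infty$, $R_{n+1}/R_{n}\to\infty$, and the phases $\theta_{n}$ are tuned block by block. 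Since $\alpha<1$, for the appropriate $\theta_{n}$ the Pr\"ufer amplitude at energy $\lambda_{j}$ of a distinguished solution is contracted by a definite (and, as $R_{n}\to\infty$, large) factor on each block with $j(n)=j$; these contractions, over the infinitely many such blocks, accumulate so that this solution lies in $L^{2}(0,\infty)$, and since $W\equiv0$ near $0$ it also satisfies the boundary condition there. Thus every $\lambda_{j}$ is an embedded eigenvalue of $H$. On the other hand, since $\alpha>\tfrac12$, $\sum_{n}C_{n}<\infty$ and $R_{n+1}/R_{n}\to\infty$, the total Pr\"ufer variation at a.e.\ $E\in(E_{0},\infty)$ is finite; all solutions of $H\psi=E\psi$ are then bounded, so by subordinacy theory $[E_{0},\infty)\subseteq\sigma_{ac}(H)$, whence $\sigma_{ac}(H)=\sigma_{ac}(-\Delta)=[E_{0},\infty)$.

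It remains to realize $V$ as $h''/h$ with $f=h^{2/(n-1)}$ a genuine metric, and the clean way is to \emph{build $h$ first}: since a choice of $h$ determines $V=h''/h$, set $h=h_{0}\,e^{\phi}$ where $\phi$ is the small bounded solution, vanishing on $[0,R_{1}]$, of the Riccati equation $\phi''+2(h_{0}'/h_{0})\phi'+(\phi')^{2}=W$ (this has such a solution because $W$ is small and oscillatory, so the relevant integrals converge and stay small); then $h>0$ automatically, $h/h_{0}\to\text{const}$, and $h''/h=V_{0}+W=V$. Put $f=h^{2/(n-1)}>0$; since $\phi\equiv0$ near $0$, $f$ coincides with $f_{0}$ there, hence extends to an odd smooth function with $f(0)=0$, $f'(0)=1$, and $g=dr^{2}+f^{2}g_{S^{n-1}}$ is a smooth complete rotationally symmetric metric on $\RR^{n}$ with $K(r)\to-K_{0}$, $\sigma_{ac}(-\Delta)=[E_{0},\infty)$, and $f^{-(n-1)/2}\psi_{j}$ an $L^{2}(M_n)$ eigenfunction of $-\Delta$ at $\lambda_{j}$ for every $j$. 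Carrying this out with $K_{0}=0$ and with any $K_{0}>0$ proves Theorem~\ref{1}. The main obstacle is the choice of $W$: the blocks $R_{n}$, amplitudes $C_{n}$, phases $\theta_{n}$ and the schedule $j(\cdot)$ must be interlaced so that the Wigner--von Neumann resonances at \emph{all} the (possibly densely packed) $\lambda_{j}$ are realized at once, each yielding a genuine $L^{2}$ eigenfunction and none spoiling the others or the boundedness of solutions at a.e.\ energy; moreover $W$ must stay small enough for the auxiliary Riccati function $\phi$ to exist and keep $f=h^{2/(n-1)}$ an honest metric. (The borderline point $E_{0}\in A$, possible only for $K_{0}=0$, is handled separately, using a positive $r^{-2}$-type tail rather than a Wigner--von Neumann one.)
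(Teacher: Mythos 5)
Your derivation of the Corollary from Theorem \ref{1} (take $A$ countable and dense) is exactly the paper's one-line argument, and your sketch of Theorem \ref{1} follows the same overall strategy as the paper: rotationally symmetric metric, reduction of the radial part to a half-line Schr\"odinger operator via $u=f^{-(n-1)/2}\psi$, an unperturbed model near the origin so the boundary condition is automatic, blocks of Wigner--von Neumann type potentials resonant at one $\lambda_{j(n)}$ per block with the phase tuned to the incoming Pr\"ufer phase of the distinguished solution, a schedule visiting every index infinitely often, and absolutely continuous spectrum from the slow decay of the perturbation. However, the step you label ``the main obstacle'' --- interlacing the blocks so that every eigenfunction's contraction on its own blocks beats both its (possibly amplified) behavior on all the non-resonant blocks and the growth of the block lengths, uniformly as new, densely packed frequencies enter --- is not a residual detail: it is the entire technical content of the paper (the building-block Theorem \ref{Twocase}, which gives decay $2(\tfrac{x_1-b}{x_0-b})^{-100}$ for the targeted energy together with the factor-$2$ stabilization \eqref{Key1} for all other solutions at the finitely many energies currently in play, followed by the double induction with the slowly growing $N(k)$ and the bookkeeping \eqref{eigenj}--\eqref{Tk1Jk}). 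As written, your proposal reduces the theorem to exactly this lemma and then asserts it can be arranged; that is the genuine gap.

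A secondary point concerns the realization of the potential by a metric. You prescribe $V$ first and recover $h$ (hence $f$) by solving a Riccati equation, claiming $h\sim c\,h_0$. This is not justified and, with your parameters, is likely false: the non-oscillatory second-order drift of the Riccati solution contributes $\asymp C_n^2R_{n+1}^{2-2\alpha}=(C_nR_{n+1}^{1-\alpha})^2$ per block to $\phi=\log(h/h_0)$, and $C_nR_{n+1}^{1-\alpha}\to\infty$ is forced by your eigenvalue mechanism, so $\phi$ diverges. What you actually need (and what does hold) is only $\phi'\to0$, which already gives $h'/h\to h_0'/h_0$ and hence $K\to-K_0$; but positivity of $h$ (nonoscillation of $h''=Vh$ at the borderline ``zero energy'', delicate in the flat case where a non-oscillatory perturbation of size $r^{-\alpha}$, $\alpha<2$, would cause oscillation) must still be argued. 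The paper avoids this entirely by prescribing $S=f_1'/f_1$ and setting $f_1=\exp\bigl(\int(\sqrt{K_0}+f)\bigr)$, so positivity and smoothness of the metric are automatic, at the cost of having to verify (as in \eqref{qnov11}) that the induced potential is of Wigner--von Neumann type up to an $O(x^{-2})$ error. You should either adopt that order of construction or supply the nonoscillation argument.
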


An interesting question is to study the curvature conditions for
possibility to embed an {\it arbitrary} countable set in the
absolutely continuous spectrum. It turns out, for finite sets it can
be done with $r|K(r) +K_0|=O(1),$  while for countable sets it is
enough to require that  $r|K(r) +K_0| \to\infty,$ no matter how
slowly. We have
\begin{theorem}\label{2}
\begin{enumerate}

\item For any finite $A\subset
\left[\frac{K_0}{4}(n-1)^2,\infty \right)$, there exists an
$n$-dimensional manifold with $r|K(r) +K_0|<C,$ $\sigma_{ac}(-\Delta)= \left[\frac{K_0}{4}(n-1)^2,\infty \right), $ and an
embedded eigenvalue at each $\lambda \in
A.$
\item For any countable $A\subset
\left[\frac{K_0}{4}(n-1)^2,\infty \right)$ and any $C(r)>0$ with  $\lim_{r\to\infty} rC(r)=\infty,$ there exists an
$n$-dimensional manifold with $|K(r) +K_0|<C(r),$ $\sigma_{ac}(-\Delta)= \left[\frac{K_0}{4}(n-1)^2,\infty \right), $ and an
embedded eigenvalue at each $\lambda \in
A.$
\end{enumerate}
\end{theorem}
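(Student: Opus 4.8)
The plan is to reduce the problem to a question about Schrödinger operators on the half-line via separation of variables, and then to construct the curvature perturbation so that a prescribed Schrödinger potential appears. On a rotationally symmetric manifold with metric $g = dr^2 + f(r)^2 d\omega^2$ (where $d\omega^2$ is the round metric on $S^{n-1}$), the radial curvature is $K(r) = -f''(r)/f(r)$, and the Laplacian restricted to radial functions, after the unitary change $u(r) = f(r)^{(n-1)/2}\psi(r)$, becomes a half-line Schrödinger operator $-\psi'' + V(r)\psi$ with

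$$V(r) = \frac{(n-1)(n-3)}{4}\,\frac{f'(r)^2}{f(r)^2} + \frac{n-1}{2}\,\frac{f''(r)}{f(r)}.$$

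When $f(r) = e^{\sqrt{K_0}\,r}$ (the model asymptotically hyperbolic case) one gets $V \equiv \frac{K_0}{4}(n-1)^2$, and when $f(r)=r$ (the flat case) one gets $V(r) = c/r^2$ decaying to $0$; in both cases $\sigma_{\mathrm{ess}} = [\frac{K_0}{4}(n-1)^2,\infty)$. So the first step is to write down precisely this reduction and record that $|K(r)+K_0|$ small (resp. $r|K(r)+K_0|$ bounded) translates into control on how fast $f$ and $f'/f$ approach their model values, hence into a decay/boundedness condition on $V(r) - \frac{K_0}{4}(n-1)^2$ — roughly, $|K(r)+K_0| < C(r)$ forces $|V(r) - \frac{K_0}{4}(n-1)^2| \lesssim \int_r^\infty C$, and $r|K(r)+K_0|$ bounded gives a potential of Wigner–von Neumann type, $O(1/r)$ oscillating.

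The second step is the half-line construction. I would invoke the now-standard technique (going back to Wigner–von Neumann, and in the sharp quantitative form developed for embedded eigenvalue constructions on the line) of building a potential $W(r) = O(1/r)$ — more precisely a finite sum $\sum_{j} \frac{a_j}{r}\sin(2\sqrt{\lambda_j - \frac{K_0}{4}(n-1)^2}\,r + \theta_j)$ for part (1), and a slowly-growing-amplitude analogue $\sum_j \frac{b_j(r)}{r}\sin(\dots)$ with $b_j(r)\to 0$ possibly but $rb_j(r)/\text{(slot width)}\to\infty$ arranged in disjoint ``blocks'' of $r$ for part (2) — such that the operator $-\psi'' + (\frac{K_0}{4}(n-1)^2 + W(r))\psi$ on $[0,\infty)$, with an appropriate boundary condition, has $L^2$ eigenfunctions at each $\lambda_j \in A$ while preserving $\sigma_{ac} = [\frac{K_0}{4}(n-1)^2,\infty)$ (the a.c. spectrum survives because $W \in L^2$ at infinity, or more robustly because $W$ is a sum of pieces each of bounded variation with suitable decay, so one can apply subordinacy/WKB or Kato–Rosenblum-type arguments). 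For a single eigenvalue this is the classical Wigner–von Neumann potential; for finitely many one superposes finitely many such resonant terms and checks they do not destroy each other's eigenfunctions — decay of the cross terms handles this. For countably many, one cannot keep a fixed $1/r$ amplitude, so one partitions $[R_k,\infty)$ into successive intervals, on the $k$-th of which one ``switches on'' the resonant term for $\lambda_k$ with an amplitude that can be taken as small as desired (this is what buys $|K(r)+K_0| < C(r)$ for arbitrarily slowly vanishing $C$) provided the interval is long enough; the hypothesis $\lim_{r\to\infty} rC(r) = \infty$ is exactly what guarantees enough ``room'' to complete each eigenfunction's decay within its block.

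The third step is to invert the reduction: given the target potential $V(r) = \frac{K_0}{4}(n-1)^2 + W(r)$ produced above, solve the (nonlinear, second-order) ODE for $f$ — it is a Riccati-type equation if one writes $h = f'/f$ — with the model boundary behavior at infinity, check that $f > 0$, $f$ smooth, $f'>0$, and that the resulting metric extends smoothly across $r=0$ (so $f(0)=0$, $f'(0)=1$, $f$ odd near $0$), and finally read off $K(r) = -f''/f$ and verify the claimed bound $|K(r)+K_0| < C(r)$ (resp. $r|K(r)+K_0| < C$). One should also confirm that the radial eigenfunction on the half-line yields a genuine $L^2(M_n, dv_g)$ eigenfunction of $-\Delta$, which it does because $dv_g = f(r)^{n-1}\,dr\,d\omega$ and the substitution $u = f^{(n-1)/2}\psi$ was precisely designed so that $\|\psi\|_{L^2(dr)} = \|u\|_{L^2(f^{n-1}dr)}$.

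The main obstacle I anticipate is the countable case of part (2): making the block construction quantitatively compatible with an \emph{arbitrary} prescribed gauge $C(r)$ with only $rC(r)\to\infty$. One must simultaneously (a) choose the $k$-th block long enough that a resonant term of amplitude $\sim C(r)/2$ on that block drives the corresponding solution into $L^2$-smallness by the block's end, (b) ensure the tail contributions of all later (tiny) blocks do not spoil eigenfunctions already constructed, and (c) keep the total potential's behavior at infinity good enough to preserve purely a.c. spectrum there. Balancing (a) against the shrinking of $C$ is delicate — the length needed grows as $C$ shrinks, and one must verify that $rC(r)\to\infty$ is genuinely sufficient (this is where the sharpness claimed in the introduction lives). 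The flat versus hyperbolic cases differ only in the model $f$ and in elementary estimates for the Riccati equation, so I expect handling both uniformly to cost little beyond bookkeeping; similarly the passage from the half-line operator back to the manifold is routine once the ODE for $f$ is shown to be globally solvable with the right asymptotics.
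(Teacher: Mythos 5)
Your overall architecture (reduction of the rotationally symmetric Laplacian to a half-line Schr\"odinger operator via $u=f^{(n-1)/2}\psi$, Wigner--von Neumann resonant terms, a block decomposition for the countable case, and inversion of the Riccati equation to recover $f$ and hence $K(r)$) matches the paper's. But there are two genuine gaps, both at the heart of why the construction is delicate.

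First, the boundary condition at the origin. For each $\lambda_j$ the eigenfunction on the manifold must match, at a fixed radius, the logarithmic derivative of the interior solution (a Bessel-type function determined by smoothness of the metric at $O$); equivalently, in the half-line picture each $\lambda_j$ carries its \emph{own} prescribed ratio $u'/u$ at the left endpoint. Superposing finitely many resonant terms produces a subordinate (decaying) solution at infinity for each $\lambda_j$, but its boundary value at the origin is whatever it turns out to be, and there is no mechanism in your plan to force it to equal the prescribed one --- this is exactly the obstruction that makes Simon's construction discontinuous and Naboko's restricted to rationally independent energies. The paper resolves it by building the potential \emph{piecewise from the origin outward}: on each block the phase $\phi$ of the resonant term is chosen (using the intermediate value property of $y'_\lambda(x_0)/y_\lambda(x_0)$ as $\phi$ varies, see the proof of Theorem \ref{Twocase}) so that the decaying solution matches the already-determined boundary data at the left endpoint of that block. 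Your plan would need either this sequential phase-matching or a nontrivial simultaneous fixed-point argument in the phases, and it currently has neither.

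Second, and more seriously, in the countable case a single resonant block per eigenvalue does not produce an $L^2$ eigenfunction. After its block ends, $w_{\lambda_k}$ is propagated by a potential that is non-resonant for $\lambda_k$ on all later blocks; by the asymptotics of Theorem \ref{thasy1}(1) such a solution is asymptotically free, i.e.\ its amplitude stabilizes at a (small but nonzero) constant, so $\int^\infty |w_{\lambda_k}|^2=\infty$ no matter how small it was driven at the end of its block. Each eigenvalue must be revisited by a full-amplitude resonant block infinitely often, and then one must control the growth (a factor up to $2$ per block, hence $2^{N(k)}$ per stage) incurred while the other eigenvalues are being treated. The paper's double induction --- stage $k$ consists of $N(k)$ blocks of equal length $T_k$, one resonant block per active eigenvalue, with the net contraction $2^{N(k)+1}N(k)^{100}C_k^{-99}\ll 1$ enforced by letting $N(k)$ grow slowly relative to $C_k$ (equations \eqref{DeCk}--\eqref{Gapr95}) --- is precisely the mechanism that makes the tail $L^2$ while keeping $|K(r)+K_0|\le C(r)$; this balance, not the length of a single block, is where the hypothesis $rC(r)\to\infty$ is actually spent, and it is missing from your plan.
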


{\bf Remark.}
\begin{enumerate}
\item
Theorem \ref{2} is sharp in the asymptotically hyperbolic
  case in the following sense. If $r|K(r) +K_0|\to 0$ (or even is
  bounded by a sufficiently small constant),  there can be no embedded
  eigenvalues \cite{kumura2010radial}. We conjecture it is also sharp in a similar sense in the
  asymptotically flat case.
\item  We conjecture that, at least in the hyperbolic case, (2) of
  Theorem \ref{2} is sharp in an even stronger sense. Namely, that given a monotone $C(r)>0,$ for any countable $A\subset
 \left[0,\infty \right)$  there exists an
 $n$-dimensional manifold with $|K(r) +K_0|<C(r),$ $\sigma_{ac}(-\Delta)= \left[\frac{K_0}{4}(n-1)^2,\infty \right), $ and an
 embedded eigenvalue at each $\lambda \in
 A$ {\it if and only if}  $\lim_{r\to\infty} rC(r)=\infty.$ Given
 Theorem \ref{2} this statement would only require proving that  if $C(r)$ is bounded,
only eigenvalues below a certain threshold can be embedded, as is the
case for $n=1$ \cite{kumura2010radial}.
\end{enumerate}

Let us present more detail on the history. For the asymptotically hyperbolic case,
the sharp transition on a possibility to embed one eigenvalue was
given by Kumura   \cite{kumura2010radial} based on the arguments of Kato \cite{kato}.
 He   excluded eigenvalues greater than $\frac{K_0(n-1)^2}{4}$ under the assumption that $K_{\rm rad} + K_0 = o(r^{-1})$, and  also  constructed a manifold with the radial curvature  $K_{\rm rad} + K_0 = O(r^{-1})$ and with an eigenvalue $\frac{K_0(n-1)^2}{4} + 1$ embedded  into the essential spectrum $[ \frac{K_0(n-1)^2}{4} , \infty )$.

Previous results on absence of embedded eigenvalues under the radial
curvature conditions are reviewed in \cite{donn}. They go back to Pinsky \cite{pinsky1979spectrum}, with
a later milestone by Donnelly \cite{donnelly1990negative}. 
Some recent results on the absence of   eigenvalues can be found in \cite{liugrowth1,liugrowth2}.

For the asymptotically flat case, the absence of embedded eigenvalues
results go back to \cite{karp}. Kumura, Donnelly, and Garofalo
\cite{kumuraflat,donnelly1992,donnelly1999} showed the absence of
positive eigenvalues of  the Laplacian
if the curvature $K_{\rm rad}   = o( r^{-2} )$. As mentioned, we conjecture here that Laplacian has no positive eigenvalues if $K_{\rm rad}   = o( r^{-1} )$.

Under certain stronger curvature decay assumption on the perturbation, limiting
absorption principle, originally from  Agmon's theory
\cite{agmon1975}, holds for the Laplacian. See
\cite{kumura2013limiting,Tao}  and references therein. In this case,
Laplacian   has purely absolutely continuous spectrum.

This paper is the first one in the series where we construct manifolds
with unusual spectral properties of the Laplacian  and certain sharp curvature
bounds. For example, in the upcoming \cite{jl1} we obtain Riemannian manifolds
with singular continuous spectrum embedded in the spectrum of the
Laplacian.

The Riemannian manifolds $(M,g)$ we construct  are rotationally
symmetric, and we construct rotationally symmetric eigenfunctions, thus
reducing the problem to a one-dimensional Schr\"odinger operator.
Fix some $O\in M$ as the origin. Using the radial coordinates (from
$O$) we construct  Riemannian manifold with the structure of the form $(M,g) =
\bigl( {  \R}^n, dr^2 + f_1^2(r) g_{S^{n-1}(1)} \bigr)$ where $
g_{S^{n-1}(1)} $ is the standard Riemannian metric on the unit
sphere, and we need to construct $f_1$ so that the Laplacian has the desired properties.
Suppose $h(r)$ is a function on $M$ only depending on the radius $r$.
Then the Laplacian is equivalent to the following one-dimensional Schr\"odinger operator,
\begin{align}\label{0lap1}
    - \Delta_{g} \bigl( h(r)  \bigr)
    = -\left\{ \frac{\partial ^2}{\partial r^2}
    + (n-1) S(r) \frac{\partial }{\partial r} \right\} h(r),
\end{align}
where
\begin{equation}\label{Sr}
   S(r)=\frac{f_1^\prime(r)}{f_1(r)}.
\end{equation}
In order to make the  manifold smooth in the neighborhood of $O$,  $ f_1^{\text{even}}(0)$ must vanish at $0$ and $f_1^{\prime}(0)\neq 0 $.
This implies  $S(r)$ is singular at $0$. Thus we need to deal with
one-dimensional Schr\"odinger operator (\ref{0lap1}) with
singularities at both $0$ and $\infty$.

In the neighborhood of $\infty$ by the Liouville transformation,
Laplacian (\ref{0lap1}) can be normalized to a Schr\"odinger operator
of the form \begin{equation}\label{S}-\frac{d^2}{dx^2}+q(x).
\end{equation}

Constructing operators (\ref{S}) with given sets embedded as eigenvalues
in the essential spectrum is an old question, going back to the
celebrated work of Wigner-von Neuman \cite{von1929uber} who
constructed an explicit potential with an embedded (given) eigenvalue.

Simon \cite{simon1997some}  showed that for any rate of decay $h(x)$ that is
slower than Coulomb and
any countable subset $A\subset\R^+,$ there exists  $q(x)$ bounded by
$h(x),$ so that operator (\ref{S}) (whole-line or with the desired boundary
conditions) has  an embedded eigenvalue at each $\lambda\in A.$ However, the potential in Simon's construction is not
continuous and thus cannot be used for our purposes. Previously,
Naboko \cite{naboko1986dense} constructed   smooth  potentials with
the same property but only if elements of $A$ are rationally independent.

Naboko's construction starts from the origin.  Then  he first
constructs piecewise-constant potentials with desired properties,
which are then smoothed out. Simon uses a different method. He uses the
Wigner-von Neumann type to construct the desired potential and  the method of  $L^2$  perturbations
to guarantee  boundary conditions at the origin $0$ for the
eigenfunctions. His construction starts at $\infty$, thus it is
nontrivial to make it smooth.

In this paper we develop a new construction, based on {\it piecewise}
Wigner-von Neumann potentials, different from both \cite{naboko1986dense}  and
\cite{simon1997some}. In fact, we view the construction itself as one
of the important achievements of this paper. It  is robust and fundamental in that it can be applied in a variety
of contexts to construct embedded eigenvalues.
In the forthcoming work it is adapted by one of the authors and Ong to
construct eigenvalues embedded into the spectral band for perturbed
periodic operator, in both continuous and discrete settings
\cite{ld1,liu2018criteria}, and also to construct eigenvalues embedded into the absolutely continuous spectrum for  perturbed Stark type operators \cite{liu,liu2018sharp}.

First, in order to deal with singularities at
both $0$ and infinity,  it is natural to  construct  Riemannian metric around $0$ and $\infty$
separately so that the two operators  $- \frac{\partial ^2}{\partial r^2}
    -(n-1) S(r) \frac{\partial }{\partial r} $ (one  around 0,
    another around $\infty$)  have the given eigenvalues.

 We start at  $O$ with the standard Euclidean
metric for $r\leq \frac{1}{2}$. Thus the eigenfunctions of the Laplacian (\ref{0lap1}) are given by
the Bessel functions.  In the neighborhood of $\infty$ ($r\geq 3$)  we  use piecewise Wigner-von Neumann type functions  to
construct the $f_1(r)$, adding the eigenvalues one (or fewer) at a time. We allow Wigner-von Neumann type potentials to
be adapted in the next segment to balance  the   boundary conditions
of the associated eigenfunctions.

When further eigenvalues are taken into  consideration,  we need to
adapt the next segment of Wigner-von Neumann type potential  to
balance the new boundary conditions. However,  Wigner-von Neumann type
potential (associated to a fixed eigenvalue) may significantly change other eigenfunctions.
 As we add new eigenvalues, the change will accumulate.
 To overcome this difficulty,
we use the quantitative analysis to study the relationships for all  the  Wigner-von Neumann type potentials, corresponding  eigenfunctions
and the other eigenfunctions. In particular, an important building
block is a Theorem that allows to construct a Wigner-von Neumann type
potential on a sufficiently long interval so that a solution for a
given energy with given boundary conditions decays, while solutions for
energies from a given finite set, for {\it all}  boundary conditions, do not grow too much. Then we proceed with double
induction, so that at each new step we add an interval with decay for each
previously  treated energy, to ensure the overall decay.

After the separate construction,
we need to connect the Riemannian metric  at $r<\frac{1}{2}$ and $r>3$  smoothly so that the eigenfunctions of
the two separate operators on  $r<\frac{1}{2}$ and $r>3$   connect
smoothly. This can be done if the boundary conditions of
eigenfunctions match at some  fixed point $r\in[\frac{1}{2},3]$.

As we work with the one-dimensional construction,  the proof of  Theorems
\ref{Thembedfinite} and \ref{Thembedcountable}, establishes also  the following Theorems.

\begin{theorem}\label{Halfline}
Let $\{\lambda_j\}$ be an arbitrary set of distinct positive numbers. Let $\{\theta\}_j$
be a sequence of angles in $[0,\pi]$. If the  set $\{\lambda_j\}$ is finite, then there exist potentials $q (x)\in C^{\infty}[0,\infty)$  such that
\begin{enumerate}
   \item
  for each $j$, $(-D^2+q)u=\lambda_j u$ has an $ L^2(\R^+)$ solution  such that
  \begin{equation*}
    \frac{u^\prime(0)}{u(0)}=\tan\theta_j,
  \end{equation*}

   \item  $|q(x)|= \frac{O(1)}{|x|+1}$.
\end{enumerate}
If the set $\{\lambda_j\}$ is countable, then for any function  $C(x)>0$     on  $(0,\infty)$  with $  \lim_{x\to \infty}C(x) = \infty$,  there exist  potentials
 $q (x)\in C^{\infty}[0,\infty)$  such that
\begin{enumerate}
   \item
  for each $j$, $(-D^2+q)u=\lambda_j u$ has an $ L^2(\R^+)$ solution  such that
  \begin{equation*}
    \frac{u^\prime(0)}{u(0)}=\tan\theta_j,
  \end{equation*}

   \item  $|q(x)|\leq  \frac{C(x)}{|x|+1}$.
\end{enumerate}
\end{theorem}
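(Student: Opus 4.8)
The plan is to build $q$ as a \emph{piecewise Wigner--von Neumann potential}. Enumerate the eigenvalues as $\lambda_1,\lambda_2,\dots$, write $k_j=\sqrt{\lambda_j}$, and on a sequence of disjoint intervals $I_n=[A_n,B_n]$ with $A_n\to\infty$ set
\[ q(x)=\frac{a_n}{x}\,\sin\!\big(2k_{j(n)}x+\varphi_n\big),\qquad x\in I_n, \]
with $q\equiv 0$ off $\bigcup_n I_n$ (in particular near $0$), where $n\mapsto j(n)$ is a schedule visiting every index infinitely often. For each $j$ I would let $u_j$ be \emph{the} solution of $(-D^2+q)u_j=\lambda_j u_j$ with $u_j(0)=\cos\theta_j$, $u_j'(0)=\sin\theta_j$, so that part (1) holds by construction; the whole task is then to choose the data $(A_n,B_n,a_n,\varphi_n)$ so that every such $u_j$ lies in $L^2(\R^+)$ while keeping $\abs{q(x)}$ below the prescribed envelope.

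The building block is a quantitative lemma proved in modified Pr\"ufer (EFGP) coordinates: for $-u''+qu=\mu u$ write $u=R\sin\phi$, $u'=\sqrt\mu\,R\cos\phi$, so that $(\ln R)'=\tfrac{q}{2\sqrt\mu}\sin 2\phi$ and $\phi'=\sqrt\mu-\tfrac{q}{\sqrt\mu}\sin^2\phi$. On $I_n$, taking $\mu=\lambda_{j(n)}$, the product $\sin 2\phi\cdot\sin(2k_{j(n)}x+\varphi_n)$ has nonoscillatory part $\tfrac12\cos(2\phi_0-\varphi_n)$, so $R(B_n)/R(A_n)\approx(B_n/A_n)^{\,a_n\cos(2\phi_0-\varphi_n)/4k_{j(n)}}$; I would pick $\varphi_n$ in terms of the current Pr\"ufer angle $\phi_0$ of $u_{j(n)}$ at $A_n$ so that $\cos(2\phi_0-\varphi_n)<0$, putting this solution on the decaying branch, with decay factor as small as desired once $B_n/A_n$ and $a_n$ are large enough. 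For every other energy $\lambda_i$, $i\neq j(n)$, the resonance is absent: $\int_{A_n}^{B_n}\frac{\sin 2\phi\,\sin(2k_{j(n)}x+\varphi_n)}{x}\,dx$ is a convergent oscillatory integral of size $O(1/A_n)$, so the transfer matrix across $I_n$ at energy $\lambda_i$ differs from the free one by $O(a_n/A_n)$ --- with implied constant depending only on the finitely many energies already in play, \emph{uniformly in the boundary condition} --- i.e.\ every solution at energy $\lambda_i$ changes size by a factor $1+O(a_n/A_n)$.

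With this in hand I would run a double induction. In the \textbf{finite} case $\{\lambda_1,\dots,\lambda_m\}$, let $j(n)$ cycle through $1,\dots,m$, take $A_n$ growing geometrically, and use a single constant amplitude $a_n\equiv a$ (hence $\abs{q(x)}\le a/(|x|+1)=O(1)/(|x|+1)$) chosen large enough that the per-visit decay factor of each $u_j$ beats the geometric growth of $\abs{I_n}$; since $\sum_n a/A_n<\infty$, the cumulative non-resonant growth of each $u_j$ over the intervals where it is not treated stays bounded, so each $u_j$ decays geometrically along its visiting subsequence and $u_j\in L^2(\R^+)$. In the \textbf{countable} case, since $C(x)\to\infty$ I would let the schedule introduce the index $j$ only once $x$ is large enough that $C(x)>2k_j$ (the threshold forced by $L^2$-admissibility of the decay rate $x^{-a/4k_j}$), use amplitudes $a_n\le\inf_{I_n}C(\cdot)$ that nonetheless tend to $\infty$, and push $A_n\to\infty$ fast enough that $\sum_n a_n/A_n<\infty$; then, exactly as before, each $u_j$ can be made to decay as fast as needed along its visiting subsequence (the growing amplitude compensating for the longer intervals needed to accommodate more and more energies), giving $u_j\in L^2(\R^+)$ and $\abs{q(x)}\le C(x)/(|x|+1)$.

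Finally, to get $q\in C^\infty[0,\infty)$ I would replace the sharp cut-offs $\chi_{I_n}$ by smooth ones on slightly enlarged intervals and smooth the amplitude and phase transitions; each such change is a bounded perturbation on a short region near an endpoint, contributing only an extra $O(\mathrm{length}/A_n)$ to every Pr\"ufer amplitude and leaving $u_j(0),u_j'(0)$ untouched, so all the estimates survive. The hard part is exactly this simultaneous quantitative bookkeeping: one must add, infinitely often, a Wigner--von Neumann segment forcing decay of one eigenfunction while guaranteeing that the induced perturbations of \emph{all} the other (so far finitely many) eigenfunctions, for \emph{all} boundary conditions, are summable, and do so within the sharp envelope $O(1)/(|x|+1)$, resp.\ $C(x)/(|x|+1)$ --- which is what dictates the rapid growth of the $A_n$ and, in the countable case, the necessity of $C(x)\to\infty$.
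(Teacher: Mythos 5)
Your proposal is correct and follows essentially the same strategy as the paper: a piecewise Wigner--von Neumann potential treating one energy per block, with the phase on each resonant block matched to the current boundary data of the targeted eigenfunction so that it is placed on the decaying branch, combined with uniform (in boundary condition) control of all other solutions and a scheduling/double-induction argument in which the amplitude envelope must grow with the number of energies in play. The only differences are presentational: you derive the resonant/non-resonant dichotomy via Pr\"ufer variables rather than citing the Klaus--Eastham--Kalf asymptotics (Theorem \ref{thasy1}), and you control non-resonant blocks by a summable factor $1+O(a_n/A_n)$ where the paper uses a fixed factor $2$ compensated by a large decay exponent obtained from a shifted denominator $x-b$.
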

\begin{theorem}\label{Wholeline}
Let $\{\lambda_j\}$ be an arbitrary set of distinct positive numbers.  If the  set $\{\lambda_j\}$ is finite, then there exist  potentials $q (x)\in C^{\infty}(-\infty,\infty)$  such that
\begin{enumerate}
   \item
  for each $j$, $(-D^2+q)u=\lambda_j u$ has an $ L^2(\R)$ solution.
   \item  $|q(x)|=\frac{O(1)}{|x|+1}$.
\end{enumerate}
If the set $\{\lambda_j\}$ is countable, then for any function  $C(x)>0$     on  $(0,\infty)$  with $  \lim_{x\to \infty}C(x) = \infty$,  there exist potentials $q (x)\in C^{\infty}(-\infty,\infty)$  such that
\begin{enumerate}
   \item
  for each $j$, $(-D^2+q)u=\lambda_j u$ has an $ L^2(\R)$ solution,

   \item  $|q(x)|\leq  \frac{C(x)}{|x|+1} $ for $x\in\R$.
\end{enumerate}
\end{theorem}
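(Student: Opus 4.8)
The plan is to derive Theorem~\ref{Wholeline} from the half-line statement Theorem~\ref{Halfline} by a symmetric doubling argument. Conceptually, a function $u$ on $\R$ is an $L^2(\R)$ eigenfunction of $-D^2+q$ at energy $\lambda_j$ exactly when it solves the eigenvalue equation and is square integrable near $+\infty$ and near $-\infty$ simultaneously; so it suffices to build a potential $q_+$ on $[0,\infty)$ and a potential $q_-$ on $(-\infty,0]$ that each realise \emph{every} $\lambda_j$ as an eigenvalue, with the boundary data at the common endpoint $0$ chosen so that the two half-line eigenfunctions fit together into one $C^1$ function on $\R$, and so that $q_+$ and $q_-$ fit together into one $C^\infty$ function on $\R$.

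For the boundary data I would take $\theta_j=\pi/2$ for all $j$, i.e.\ impose $u(0)=0$ on both sides; this choice is self-reflective and so sidesteps the sign change of the derivative at $0$ under $x\mapsto-x$ (for a general sequence $\{\theta_j\}$ one would use $\pi-\theta_j$ on the left). Applying Theorem~\ref{Halfline} on $[0,\infty)$ with this sequence of angles produces $q_+\in C^\infty[0,\infty)$ with $|q_+(x)|\le C(x)/(x+1)$ in the countable case (resp.\ $|q_+(x)|=O((x+1)^{-1})$ in the finite case) and, for every $j$, a nontrivial $u_j^+\in L^2(\R^+)$ with $(-D^2+q_+)u_j^+=\lambda_j u_j^+$ and $u_j^+(0)=0$. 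Applying the theorem a second time, with output $\tilde q\in C^\infty[0,\infty)$ and $\tilde u_j\in L^2(\R^+)$, and reflecting, I set $q_-(x):=\tilde q(-x)$ and $u_j^-(x):=\tilde u_j(-x)$ for $x\le0$; then $q_-\in C^\infty((-\infty,0])$ obeys $|q_-(x)|\le C(|x|)/(|x|+1)$ and $u_j^-\in L^2((-\infty,0])$ solves the equation with $u_j^-(0)=0$. Since $u_j^\pm(0)=0$ while $(u_j^\pm)'(0)\ne0$, there is a unique $c_j\ne0$ with $c_j(u_j^-)'(0)=(u_j^+)'(0)$. I then set $q:=q_+$ on $[0,\infty)$, $q:=q_-$ on $(-\infty,0]$, and $u_j:=u_j^+$ on $[0,\infty)$, $u_j:=c_ju_j^-$ on $(-\infty,0]$.

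By construction each $u_j$ lies in $L^2(\R)$, is $C^1$ across $0$ (values and first derivatives match there), and solves $(-D^2+q)u_j=\lambda_j u_j$ on $\R\setminus\{0\}$, and $|q(x)|\le C(|x|)/(|x|+1)$ on all of $\R$. The one genuine point to settle is that $q$ is $C^\infty$ at the junction $x=0$, and here I would use the freedom built into the proof of Theorem~\ref{Halfline}: the Wigner--von Neumann building blocks are inserted only beyond some fixed radius, and the prescribed value of $u_j'(0)/u_j(0)$ is attained already on the initial interval by a free combination $A\cos(\sqrt{\lambda_j}x)+B\sin(\sqrt{\lambda_j}x)$, so the potential may be taken to vanish on a neighbourhood of $0$; after reflection $q_-$ vanishes there too, hence $q\equiv0$ near $0$ and in particular $q\in C^\infty(\R)$. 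Finally, once $q$ is $C^\infty$ and $u_j$ is $C^1$, bootstrapping $u_j''=(q-\lambda_j)u_j$ gives $u_j\in C^\infty(\R)$, so each $u_j$ is a bona fide $L^2(\R)$ eigenfunction at $\lambda_j$; this yields (1) and (2) simultaneously for all $j$, in both the finite and the countable cases. The main obstacle is exactly this smoothness at $x=0$ — it is the reason the argument must glue two \emph{compatibly normalised} half-line potentials, each already flat near the origin, rather than two arbitrary ones; everything else is bookkeeping.
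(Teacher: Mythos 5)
Your reduction of the whole-line statement to the half-line one---reflecting and gluing two half-line potentials that vanish near the origin, with self-reflective boundary data at $0$ and a scalar to match the Cauchy data of the two eigenfunctions---is correct and is essentially the route the paper intends, since the paper gives no separate argument for Theorem \ref{Wholeline} but derives it from the same one-dimensional construction, which indeed keeps the potential identically zero on an initial segment so the junction is automatically $C^\infty$. The only cosmetic point is that $\theta_j=\pi/2$ makes $\tan\theta_j$ undefined as Theorem \ref{Halfline} is literally written; taking the Neumann condition $\theta_j=0$ (equally invariant under $x\mapsto -x$) avoids even that.
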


 \section{Preparation}
 The following result is well known. See \cite{klaus1991asymptotic} or page 93 in \cite{eastham1982schrodinger}.
\begin{theorem}\label{thasy1}
Let $V(x)$ be a continuous function on $(R_0,\infty)$ of the form  $V(x)=4a\kappa\frac{\sin(2\kappa x+\phi)}{x}+V_1(x)$ with $x>R_0$ and $a\neq 0$, where
$|V_1(x)\leq \frac{\hat{C}}{x^2}$.
 Consider the  differential equation $-y^{''}+Vy=\lambda y$ with $\lambda>0$. Then the following asymptoticis holds (uniform with respect to $\phi$) as $x$ goes to infinity
\begin{enumerate}
   \item
    if $\kappa\neq \pm \sqrt{\lambda}$, then there exists a fundamental system of solutions $\{y_1(x),y_2(x)\}$ such that
    $y_1(x)=\cos\sqrt{\lambda} x+O(\frac{1}{x})$, $y_1^\prime(x)=-\sqrt{\lambda}\sin\sqrt{\lambda} x+O(\frac{1}{x})$,
    and $y_2(x)=\sin\sqrt{\lambda} x+O(\frac{1}{x})$, $y_2^\prime(x)=\sqrt{\lambda}\cos\sqrt{\lambda} x+O(\frac{1}{x})$;
    \item
     if $\kappa= \pm \sqrt{\lambda}$, then there exists a fundamental system of solutions $\{y_1(x),y_2(x)\}$ such that
    $y_1(x)=x^{-a}(\cos(\sqrt{\lambda} x+\frac{\phi}{2})+O(\frac{1}{x}))$, $y_1^\prime(x)=-\sqrt{\lambda} x^{-a}(\sin(\sqrt{\lambda} x+\frac{\phi}{2})+O(\frac{1}{x}))$,
    and $y_2(x)=x^{a}(\sin(\sqrt{\lambda} x+\frac{\phi}{2})+O(\frac{1}{x}))$, $y_2^\prime(x)=\sqrt{\lambda} x^{a}(\cos(\sqrt{\lambda } x+\frac{\phi}{2})+O(\frac{1}{x}))$.
    Moreover,  $y_1(x),y_1^\prime(x),y_2(x),y_2^\prime(x)$ are jointly  continuous with respect to $x,\phi$.
\end{enumerate}

\end{theorem}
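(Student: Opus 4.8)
The plan is to convert $-y''+Vy=\lambda y$ into a first order system for $Y=(y,y')^{\top}$, diagonalize its leading constant part, pass to an interaction picture, kill the oscillatory $x^{-1}$ terms by an averaging (Harris--Lutz) transformation, and finish with Levinson's asymptotic integration theorem. The whole point is that after the averaging exactly one oscillatory $x^{-1}$ term fails to be removable, and only when $\kappa=\pm\sqrt\lambda$; that residual term is what forces the power law $x^{\pm a}$.

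\textbf{Setup.} First I would write $Y'=(A_0+B(x))Y$ with $A_0=\left(\begin{smallmatrix}0&1\\-\lambda&0\end{smallmatrix}\right)$, $B(x)=\left(\begin{smallmatrix}0&0\\V(x)&0\end{smallmatrix}\right)$, and conjugate by the eigenvector matrix $T=\left(\begin{smallmatrix}1&1\\i\sqrt\lambda&-i\sqrt\lambda\end{smallmatrix}\right)$ of $A_0$ to obtain $Z'=(D+C(x))Z$ with $D=\operatorname{diag}(i\sqrt\lambda,-i\sqrt\lambda)$ and $C(x)=\frac{V(x)}{2i\sqrt\lambda}\left(\begin{smallmatrix}1&1\\-1&-1\end{smallmatrix}\right)$; the contribution of $V_1$ is $O(x^{-2})$, hence $L^1$, and is carried along inside an error matrix. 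Passing to the interaction picture $Z=e^{Dx}U$ turns this into $U'=\bigl(e^{-Dx}C(x)e^{Dx}\bigr)U$ (plus the $L^1$ part), whose entries, after expanding $\sin(2\kappa x+\phi)$ into exponentials, are finite sums of $\mathrm{const}\cdot x^{-1}e^{i(\mu x+\nu\phi)}$ with $\mu\in\{0,\ \pm2\kappa,\ \pm2\sqrt\lambda,\ \pm(2\kappa\pm2\sqrt\lambda)\}$ and $|\nu|\le 1$.

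\textbf{Averaging and the non-resonant case.} I would remove every term with $\mu\neq 0$ by a transformation $U=(I+P(x))\widehat U$ whose entries are $\mathrm{const}\cdot\int_x^\infty t^{-1}e^{i(\mu t+\nu\phi)}\,dt=O(x^{-1})$ (the integral converges because $\mu\neq 0$); one integration by parts shows $P=O(x^{-1})$ and that $P'$ reproduces the offending oscillatory term up to $O(x^{-2})$, so $\widehat U'=(\Gamma(x)+R(x))\widehat U$ where $\Gamma$ collects only the non-oscillatory $x^{-1}$ part and $R=O(x^{-2})\in L^1([R_0,\infty))$, all bounds uniform in $\phi$ because each $|\mu|$ occurring is bounded away from $0$. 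In Case 1, $\kappa\neq\pm\sqrt\lambda$ (and $\kappa\neq 0$, the latter being trivial) makes every $\mu$ nonzero, so $\Gamma\equiv 0$; the refined Levinson theorem (using $R=O(x^{-2})$) gives $\widehat U=I+O(x^{-1})$, hence $Z=(I+O(x^{-1}))e^{Dx}$, and selecting the solutions with $U\to(\tfrac12,\tfrac12)$ and $U\to(\tfrac1{2i},-\tfrac1{2i})$ and reading off the two rows of $Y=TZ$ (the second row gives $y'=i\sqrt\lambda(Z_1-Z_2)$) yields exactly $y_1=\cos\sqrt\lambda x+O(x^{-1})$, $y_2=\sin\sqrt\lambda x+O(x^{-1})$ with the stated derivative asymptotics.

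\textbf{The resonant case.} If $\kappa=\sqrt\lambda$ (the sign $-\sqrt\lambda$ is analogous, up to a constant phase) then exactly the off-diagonal entries of $e^{-Dx}C(x)e^{Dx}$ acquire a non-oscillatory part, so $\Gamma(x)=\tfrac1x M_\phi$ with $M_\phi=\left(\begin{smallmatrix}0&-ae^{i\phi}\\-ae^{-i\phi}&0\end{smallmatrix}\right)$, a constant matrix with eigenvalues $\pm a$ and eigenvectors $(e^{i\phi/2},\mp e^{-i\phi/2})^{\top}$. Diagonalizing $M_\phi$ by the corresponding $\phi$-dependent constant matrix $S$, i.e. $\widehat U=S\widetilde U$, turns the system into $\widetilde U'=\bigl(\tfrac1x\operatorname{diag}(-a,a)+\widetilde R\bigr)\widetilde U$ with $\widetilde R\in L^1$; since $\int^x t^{-1}\,dt=\ln x$ is monotone, Levinson's dichotomy condition holds and $\widetilde U=(I+O(x^{-1}))\operatorname{diag}(x^{-a},x^{a})$. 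Unwinding $Z=e^{Dx}S\widetilde U$ and $Y=TZ$: the column $\widetilde U\sim x^{-a}(1,0)^{\top}$ gives $Z\sim x^{-a}(e^{i(\sqrt\lambda x+\phi/2)},e^{-i(\sqrt\lambda x+\phi/2)})^{\top}$, hence $y=Z_1+Z_2\sim 2x^{-a}\cos(\sqrt\lambda x+\tfrac\phi2)$, which after normalization is $y_1$; the column $\widetilde U\sim x^{a}(0,1)^{\top}$ gives $y\sim 2ix^{a}\sin(\sqrt\lambda x+\tfrac\phi2)$, which is $y_2$ (it is anyway only determined modulo $y_1=O(x^{-a})$); differentiating via $y'=i\sqrt\lambda(Z_1-Z_2)$ and $(x^{\pm a})'=O(x^{\pm a}/x)$ gives the stated $y_1',y_2'$. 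Because $M_\phi$, its eigenbasis $S$, the $L^1$ remainder, and the Levinson solutions all depend continuously (indeed real-analytically) on $\phi$, and the asymptotic normalizations can be chosen continuously in $\phi$, one obtains the joint continuity of $y_1,y_1',y_2,y_2'$ in $(x,\phi)$ and the uniformity of the error estimates.

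\textbf{Main obstacle.} The crux is the resonant case: one must correctly separate the non-removable non-oscillatory $x^{-1}$ term from the removable oscillatory ones, identify it as the constant matrix $\tfrac1x M_\phi$ with eigenvalues $\pm a$ (the Wigner--von Neumann mechanism), and then verify that after diagonalizing $M_\phi$ the perturbation is still $L^1$ and that Levinson's dichotomy survives the $\mp a/x$ in the exponents — all while tracking the phase shift $\phi/2$ correctly through the chain of conjugations $\widehat U=S\widetilde U$, $Z=e^{Dx}U$, $Y=TZ$ and checking that each step is continuous in $\phi$.
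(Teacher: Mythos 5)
The paper does not prove Theorem \ref{thasy1} at all: it is quoted as a known result with references to Klaus and to Eastham--Kalf (p.~93), and your argument --- diagonalize the first-order system, remove the non-resonant oscillatory $x^{-1}$ terms by a Harris--Lutz transformation, and apply Levinson's theorem to the surviving resonant term $\tfrac1x M_\phi$ with eigenvalues $\pm a$ --- is precisely the standard proof found in those sources, with the resonant phase $\phi/2$ and the signs of $M_\phi$ computed correctly. The only point deserving an extra line is the claimed relative error $O(x^{-1})$ for the \emph{dominant} solution when $0<|a|<\tfrac12$: there one must choose the limits of integration in the Volterra iteration so that the component along the recessive direction is integrated from $\infty$, which you implicitly acknowledge by noting that $y_2$ is only determined modulo $y_1$.
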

The following theorem is an important building block for our inductive
construction. It allows to construct a potential with desired bounds
that ensures decay of the solution  for a given energy/boundary
condition and stabilization (not much growth) of solutions for
energies from a given finite set with arbitrary boundary conditions.
\begin{theorem}\label{Twocase}
Suppose   $\lambda>0$ and $ A=\{\hat{\lambda}_j>0\}_{j=1}^k$ with
$\lambda\notin A.$
Suppose  $\theta_0\in[0,\pi]$. Let $x_1>x_0>b$. For any function $\widetilde {V}$, we define
\begin{equation*}
 \widetilde q(x)=  \frac{(n-1)^2}{4}(\sqrt{K_0}+\widetilde {V}(x))^2+\frac{n-1}{2}\widetilde {V}^{\prime}(x).
\end{equation*}

Then there exist constants $K(E, A,K_0)$, $C(E, A,K_0)$ (independent of $b, x_0$ and $x_1$) and potential $ \widetilde V(x,E,A,x_0,x_1,b,\theta_0)$  such that  for $x_0-b>K(E,A,K_0)$ the following holds:

   \begin{description}
     \item[Curvature]   for $x_0\leq x \leq x_1$, ${\rm supp}( \widetilde {V})\subset(x_0,x_1)$ and $  \widetilde {V}\in C^{\infty}(x_0,x_1)$,  and
     \begin{equation}\label{thm141}
        |  \widetilde {V}(x,E,A,x_0,x_1,b,\theta_0)|\leq \frac{C(E, A,K_0)}{x-b},
     \end{equation}
and
\begin{equation}\label{thm141nov}
        |  \widetilde {V}^\prime  (x,E,A,x_0,x_1,b,\theta_0)|\leq \frac{C(E, A,K_0)}{x-b}.
     \end{equation}
     \item[Solution for $\lambda$]  the  solution   of  $(-D^2+\widetilde q)y_{\lambda}=(\lambda +\frac{(n-1)^2}{4}K_0)y_{\lambda}$  with
     boundary condition $ \frac{y^\prime(x_0)}{y(x_0)}=\tan\theta_0$ satisfies
    \begin{equation}\label{Key2}
      ||\left(\begin{array}{c}
      y_{\lambda}(x_1) \\
      \frac{1}{\sqrt{\lambda}}y_{\lambda}^{\prime}(x_1)
    \end{array}
    \right)||\leq 2(\frac{x_1-b}{x_0-b})^{-100}||\left(\begin{array}{c}
      y_{\lambda}(x_0) \\
     \frac{1}{\sqrt{\lambda}} y_{\lambda}^{\prime}(x_0)
    \end{array}
    \right)||,
    \end{equation}
    and   for $x\in[x_0,x_1]$,
    \begin{equation}\label{Controlallx1}
      ||\left(\begin{array}{c}
      y_{\lambda}(x) \\
      \frac{1}{\sqrt{\lambda}}y_{\lambda}^{\prime}(x)
    \end{array}
    \right)||\leq  2||\left(\begin{array}{c}
      y_{\lambda}(x_0) \\
     \frac{1}{\sqrt{\lambda}} y_{\lambda}^{\prime}(x_0)
    \end{array}
    \right)||.
    \end{equation}
      \item[Solution for $\lambda_j$]    for   any solution     of  $(-D^2+ \widetilde q)y_{\lambda_j}=(\lambda_j+\frac{(n-1)^2}{4}K_0)y_{\lambda_j}$, we have
    \begin{equation}\label{Key1}
     ||\left(\begin{array}{c}
      y_{\lambda_j}(x) \\
      \frac{1}{\sqrt{\lambda_j}}y_{\lambda_j}^{\prime}(x)
    \end{array}
    \right)||\leq 2||\left(\begin{array}{c}
      y_{\lambda_j}(x_0) \\
     \frac{1}{\sqrt{\lambda_j}} y_{\lambda_j}^{\prime}(x_0)
    \end{array}
    \right)||,
    \end{equation}
    for all $x_0\leq x\leq x_1$.
   \end{description}

  \end{theorem}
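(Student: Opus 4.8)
\emph{Proof plan.} The plan is to take $\widetilde V$ of the explicit piecewise Wigner--von Neumann form
\[
  \widetilde V(x)=\frac{B\,\chi(x)\,\sin\!\big(2\sqrt\lambda\,x+\phi\big)}{x-b},
\]
where $\chi\in C^\infty(\R)$ is a cut--off equal to $1$ on $(x_0+\eta,x_1-\eta)$, vanishing to infinite order outside $(x_0,x_1)$, with $|\chi'|\le C_\eta$ for a fixed universal length $\eta$; the amplitude $B=B(\lambda,A,K_0,n)$ is chosen large and the phase $\phi$ (depending on $\theta_0,x_0,b$) is chosen to fit the boundary condition. Since
\[
  \widetilde q(x)-\tfrac{(n-1)^2}{4}K_0
  =\frac{(n-1)^2\sqrt{K_0}}{2}\,\widetilde V(x)+\frac{n-1}{2}\,\widetilde V'(x)+\frac{(n-1)^2}{4}\,\widetilde V(x)^2 ,
\]
on the bulk region $\chi\equiv1$ the potential of $-D^2+\widetilde q-\tfrac{(n-1)^2}{4}K_0$ equals $\dfrac{BD}{x-b}\sin\!\big(2\sqrt\lambda x+\phi+\alpha\big)+O\!\big((x-b)^{-2}\big)$, with $D=(n-1)\sqrt{\lambda+\tfrac{(n-1)^2}{4}K_0}>0$ and a fixed phase $\alpha=\alpha(n,\lambda,K_0)$; note $D>0$ even when $K_0=0$, the resonance then coming entirely from the $\widetilde V'$ term (this is why the flat case works too). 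The support and smoothness requirements and the bounds (\ref{thm141})--(\ref{thm141nov}) are immediate from this formula once $C(E,A,K_0)$ is taken to be a universal multiple of $B(1+\sqrt\lambda+C_\eta)$.

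For the energy $\lambda$ I would pass to Pr\"ufer variables adapted to $\sqrt\lambda$, noting that the vector $\big(y_\lambda(x),\tfrac{1}{\sqrt\lambda}y_\lambda'(x)\big)$ appearing in (\ref{Key2})--(\ref{Controlallx1}) is exactly a constant times the Pr\"ufer radius $r(x)$. On the bulk, when the Pr\"ufer angle is phase--locked to the recessive branch, the resonant term produces a non--oscillatory drift $\frac{d}{dx}\log r=-\frac{c_0 B}{x-b}+\big(\text{oscillatory}\big)+O\!\big((x-b)^{-2}\big)$ with $c_0=\tfrac{D}{4\sqrt\lambda}$; integrating, and absorbing the two length--$\eta$ taper zones and the $O\!\big((x_0-b)^{-1}\big)$ error (this is where $x_0-b>K$ enters), gives $\log r(x_1)-\log r(x_0)\le -c_0 B\log\frac{x_1-b}{x_0-b}+\log 2$, so choosing $B$ with $c_0 B\ge 101$ yields (\ref{Key2}); the same computation stopped at an arbitrary $x\in[x_0,x_1]$ yields (\ref{Controlallx1}), the decay being essentially monotone up to the bounded taper corrections. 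The delicate point is that the boundary condition $\frac{y'(x_0)}{y(x_0)}=\tan\theta_0$ must single out precisely the recessive branch, and this is what the free phase $\phi$ is for. By a continuity and winding argument --- applying case (2) of Theorem~\ref{thasy1} to $\frac{B\sin(2\sqrt\lambda x+\phi)}{x-b}$ on $(x_0+\eta,\infty)$, continuing its recessive solution back through the near--$x_0$ taper, and using the joint continuity in $\phi$ --- the map $\phi\mapsto\frac{y_1'(x_0;\phi)}{y_1(x_0;\phi)}\in\mathbb{RP}^1$ is continuous and to leading order equals $-\sqrt\lambda\tan\!\big(\sqrt\lambda x_0+\tfrac{\phi+\alpha}{2}\big)$, which sweeps all of $\mathbb{RP}^1$ over one period; for $x_0-b>K$ the $O\!\big((x_0-b)^{-1}\big)$ corrections and the taper do not destroy surjectivity, so some $\phi$ realises the slope $\tan\theta_0$.

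For each $\lambda_j\in A$ the driving frequency $2\sqrt\lambda$ of $\widetilde V$ is off resonance, $\delta_j:=|\sqrt\lambda-\sqrt{\lambda_j}|>0$. In Pr\"ufer variables adapted to $\sqrt{\lambda_j}$ one has $\psi_j(x)=\sqrt{\lambda_j}\,x+\psi_j(x_0)+O\!\big((x_0-b)^{-1}\big)$, so $\frac{d}{dx}\log r_j$ is, up to $O\!\big((x-b)^{-2}\big)$, a product of two oscillations of combined frequency $\ge 2\delta_j$; one integration by parts gives $\big|\log r_j(x)-\log r_j(x_0)\big|\le \frac{C_1 B}{\delta_j\sqrt{\lambda_j}\,(x_0-b)}+O\!\big((x_0-b)^{-1}\big)$ uniformly in $x\in[x_0,x_1]$ and in the initial angle --- hence for every solution, i.e.\ for all boundary conditions. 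Taking $K=K(E,A,K_0)$ large enough that this is $\le\log 2$ for all $j\le k$ gives (\ref{Key1}).

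The main obstacle I anticipate is making these two Pr\"ufer estimates simultaneously quantitative and uniform in the shift $b$ and in the length $x_1$: controlling the phase--locking error for $\lambda$ and closing the off--resonant oscillatory integrals for the $\lambda_j$ with constants depending only on $\lambda,A,K_0,n$, together with the topological step that the recessive direction at $x_0$ can be steered onto an arbitrary prescribed slope. There is no genuine tension between ``$B$ large for decay'' and ``$\phi$ free for the boundary condition'' because the decay rate $c_0B$ depends only on the amplitude, not on $\phi$; once the estimates are in place the constants $K$ and $C$ are read off from them.
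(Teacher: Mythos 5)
Your proposal is correct and follows essentially the same route as the paper: a smoothly cut-off Wigner--von Neumann potential resonant at frequency $2\sqrt{\lambda}$, with the amplitude chosen so that the recessive solution decays like $(x-b)^{-100}$, the free phase $\phi$ used (via a continuity/winding argument based on Theorem \ref{thasy1}(2)) to hit the prescribed slope $\tan\theta_0$ at $x_0$, and the energies in $A$ handled by off-resonance boundedness. The only difference is presentational: you rederive the resonant decay and the off-resonant stability by explicit Pr\"ufer-variable and oscillatory-integral estimates, whereas the paper quotes these asymptotics wholesale from Theorem \ref{thasy1}.
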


  \begin{proof} In this proof $C(E,A,K_0), K(E,A,K_0)$ will denote
    constants (possibly different in different equations) that depend
    on $E,A,K_0$ only. In the future, however, $C(E,A,K_0),
    K(E,A,K_0)$ will refer to the specific constants as given in the
    statement of Theorem \ref{Twocase}.

  By shifting the Schr\"odinger equation,
   we can assume $b=0$.
   Let
   \begin{equation*}
   V(x)= C(E,A,K_0)\chi_{[x_0+1,x_1-1]}(x)\frac{\sin(2 \sqrt{\lambda} x+\phi)}{x},
   \end{equation*}
  and define \begin{equation*}
  q(x)=  \frac{(n-1)^2}{4}(\sqrt{K_0}+ {V}(x))^2+\frac{n-1}{2} {V}^{\prime}(x).
\end{equation*}
Direct computation implies, for $x_0+1\leq x\leq x_1-1$,
   \begin{equation}\label{qnov11}
    q(x)=\frac{(n-1)^2}{4}K_0+C(E,A,K_0) \frac{\sin(2 \sqrt{\lambda} x+\phi+\phi^\prime)}{x}+V_1(x),
   \end{equation}
   where  $\phi^\prime\in \R$ depends on $K_0,n,\lambda$ explicitly, and ${\rm supp}(  {V}_1)\subset[x_0+1,x_1-1]$
   \begin{equation*}
   |V_1(x)|\leq \frac{C(E,A,K_0)}{x^2}.
   \end{equation*}

We extend  $V(x)$ smoothly to  $\widetilde V$ for $x_0<x<x_1$,  with
${\rm supp}( \widetilde {V})\subset(x_0,x_1)$ and $  \widetilde {V}\in C^{\infty}(x_0,x_1)$.
so that \eqref{thm141} and \eqref{thm141nov} hold.

Let  \begin{equation*}
  \widetilde{q}(x)=  \frac{(n-1)^2}{4}(\sqrt{K_0}+ \widetilde{V}(x))^2+\frac{n-1}{2} \widetilde{V}^{\prime}(x).
\end{equation*}
By \eqref{qnov11}, we have
\begin{equation*}
  \widetilde{q}(x)=  \frac{(n-1)^2}{4}K_0+C(E,A,K_0) \chi_{[x_0+1,x_1-1]}(x)  \frac{\sin(2 \sqrt{\lambda} x+\phi+\phi^\prime)}{x}+V_1(x)+V_2(x),
\end{equation*}
   where ${\rm supp}(   {V}_2)\subset(x_0,x_0+1]\cup [x_1-1,x_1)$ and
   \begin{equation}\label{V2nov}
    |V_2(x)|\leq \frac{C(E,A,K_0)}{x}.
   \end{equation}


  We first prove the  property of the solution for  $\lambda$.   By (2) of Theorem \ref{thasy1}, for $x_0>K(\lambda,A,K_0)$,   there is a solution $y$ of $(-D^2+ \widetilde{q})y_{\lambda}=(\lambda +\frac{(n-1)^2}{4}K_0)y_{\lambda}$ (we only consider $x_0+1\leq x\leq x_1-1$ so that $q=\widetilde{q}$) such that
  \begin{equation}\label{RM}
      |y _{\lambda}(x)-\frac{\cos(\sqrt{\lambda }x+\frac{\phi}{2})}{x^{100}}|\leq \frac{C(E,A,K_0)}{x^{101}},|y_{\lambda}^\prime(x)+ \sqrt{\lambda} \frac{\sin(\sqrt{\lambda} x+\frac{\phi}{2})}{x^{100}}|\leq \frac{C(E,A,K_0)}{x^{101}},
    \end{equation}
    for $x_0+1\leq x\leq x_1-1$.
    In particular,
    \begin{equation}\label{GR0}
      |y _{\lambda}(x_0+1)-\frac{\cos( \sqrt{\lambda} (x_0+1)+\frac{\phi}{2})}{(x_0+1)^{100}}|\leq \frac{C(E,A,K_0)}{x_0^{101}},|y_{\lambda}^\prime(x_0+1)+ \sqrt{\lambda} \frac{\sin(\sqrt{\lambda} (x_0+1)+\frac{\phi}{2})}{(x_0+1)^{100}}|\leq \frac{C(E,A,K_0)}{x_0^{101}}.
    \end{equation}
    Now let us consider
     $(-D^2+ \widetilde{q})y_{\lambda}=(\lambda +\frac{(n-1)^2}{4}K_0)y_{\lambda}$  for $x_0<x\leq x_0+1$.

      For $x_0<x\leq x_0+1$,
      \begin{equation*}
        \widetilde{q}=\chi_{(x_0,x_0+1]}(x)V_2(x).
      \end{equation*}
      By \eqref{V2nov}, one has
     for $x_0\leq x\leq x_0+1$
  \begin{equation}\label{equnov}
    ||\left(\begin{array}{c}
      y_{\lambda}(x) \\
      y_{\lambda}^{\prime}(x)
    \end{array}
    \right)-\left(\begin{array}{c}
      y_{\lambda}(x_0+1) \\
      y_{\lambda}^{\prime}(x_0+1)
    \end{array}
    \right)||\leq \frac{C(E,A,K_0)}{x_0}||\left(\begin{array}{c}
      y_{\lambda}(x_0+1) \\
      y_{\lambda}^\prime(x_0+1)
    \end{array}
    \right)||.
  \end{equation}
  By \eqref{GR0} and \eqref{equnov}, we have
  \begin{equation}\label{GR0nov}
      |y _{\lambda}(x_0)-\frac{\cos( \sqrt{\lambda} (x_0+1)+\frac{\phi}{2})}{(x_0+1)^{100}}|\leq \frac{C(E,A,K_0)}{x_0^{101}},|y_{\lambda}^\prime(x_0)+ \sqrt{\lambda} \frac{\sin(\sqrt{\lambda} (x_0+1)+\frac{\phi}{2})}{(x_0+1)^{100}}|\leq \frac{C(E,A,K_0)}{x_0^{101}}.
    \end{equation}

    If $x_0$ is large enough,
   the range of  $ \frac{y_{\lambda}^\prime(x_0)}{y_{\lambda}(x_0)}$ is  $\R$   when $\phi$ is varied.  Choose a $\phi$ such that
    $ \frac{y_{\lambda}^\prime(x_0)}{y_{\lambda}(x_0)}=\tan\theta_0$.

  Arguing as in the proof of  \eqref{equnov}, one has
     for $x_1-1\leq x\leq x_1$
  \begin{equation}\label{equ1nov}
    ||\left(\begin{array}{c}
      y_{\lambda}(x) \\
      y_{\lambda}^{\prime}(x)
    \end{array}
    \right)-\left(\begin{array}{c}
      y_{\lambda}(x_1-1) \\
      y_{\lambda}^{\prime}(x_1-1)
    \end{array}
    \right)||\leq \frac{C(E,A,K_0)}{x_1}||\left(\begin{array}{c}
      y_{\lambda}(x_1-1) \\
      y_{\lambda}^\prime(x_1-1)
    \end{array}
    \right)||.
  \end{equation}

  By (\ref{RM}), \eqref{equnov} and \eqref{equ1nov}, we get  that the  solution  (up to a constant) of  $(-D^2+  \widetilde{q})y_{\lambda}=(\lambda +\frac{(n-1)^2}{4}K_0)y_{\lambda}$  with
     boundary condition $ \frac{y_{\lambda}^\prime(x_0)}{y_{\lambda}(x_0)}=\tan\theta_0$   satisfies  
     \begin{equation}\label{Key2nov}
      ||\left(\begin{array}{c}
      y_{\lambda}(x_1) \\
      \frac{1}{\sqrt{\lambda}}y_{\lambda}^{\prime}(x_1)
    \end{array}
    \right)||\leq 2(\frac{x_1}{x_0})^{-100}||\left(\begin{array}{c}
      y_{\lambda}(x_0) \\
     \frac{1}{\sqrt{\lambda}} y_{\lambda}^{\prime}(x_0)
    \end{array}
    \right)||,
    \end{equation}
    and   for $x\in[x_0,x_1]$,
    \begin{equation}\label{Controlallx1nov}
      ||\left(\begin{array}{c}
      y_{\lambda}(x) \\
      \frac{1}{\sqrt{\lambda}}y_{\lambda}^{\prime}(x)
    \end{array}
    \right)||\leq  2||\left(\begin{array}{c}
      y_{\lambda}(x_0) \\
     \frac{1}{\sqrt{\lambda}} y_{\lambda}^{\prime}(x_0)
    \end{array}
    \right)||.
    \end{equation}
Suppose  $x_0\geq K(\lambda,A,K_0).$
By  Theorem \ref{thasy1} again and following the proof of \eqref{Key2nov}, \eqref{Controlallx1nov},  for   any solution     of  $(-D^2+  \widetilde{q})y_{\lambda_j}=(\lambda_j+\frac{(n-1)^2}{4}K_0)y_{\lambda_j}$, we have
    \begin{equation}\label{Key1nov}
     ||\left(\begin{array}{c}
      y_{\lambda_j}(x) \\
      \frac{1}{\sqrt{\lambda_j}}y_{\lambda_j}^{\prime}(x)
    \end{array}
    \right)||\leq 2||\left(\begin{array}{c}
      y_{\lambda_j}(x_0) \\
     \frac{1}{\sqrt{\lambda_j}} y_{\lambda_j}^{\prime}(x_0)
    \end{array}
    \right)||,
    \end{equation}
    for all $x_0\leq x\leq x_1$.

%
  \end{proof}
  Now  consider the Riemannian manifold with  rotationally symmetric   structure
\begin{align*}
     (M, g)
     = \Bigl( \R^+ \times S^{n-1}(1)
     , dr^2 + f_1^{\,2}(r)g_{S^{n-1}(1)} \Bigr).
\end{align*}
Let $dg_0$ be  the
standard measure on the unit sphere $(S^{n-1}(1), g_0)$.
Assume  that  in the neighborhood of the origin $(M,g)$  is the usual Euclidean space with its standard metric $g_0$, i.e.,
$f_1(r)=r$ and $S(r)=\frac{1}{r}$ for $0<r<\frac{1}{2}$. Then we have
  \begin{theorem}\label{Thbes}
   For any  positive number $\lambda>0$, there exists a rotationally invariant function $h_{1,\lambda}(r)$ such that
   \begin{equation}\label{Gbes}
   - \Delta_{g} \bigl( h_{1,\lambda}(r)  \bigr)
    = -  h_{1,\lambda}^{\prime\prime}(r)
    -\frac{n-1}{r}  h_{1,\lambda}^{\prime}(r) =( K_0(n-1)^2/4+\lambda )h_{1,\lambda}(r)
 \end{equation}
 and
   $h_{1,\lambda}\in L^2(M_1,dv_g)$, where $(M_1, g)
     = \Bigl( (0,\frac{1}{2}] \times S^{n-1}(1)
     , dr^2 + r^{\,2}g_{S^{n-1}(1)} \Bigr)$. 
  \end{theorem}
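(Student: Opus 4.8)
The plan is to recognize the ODE in \eqref{Gbes} as a disguised Bessel equation and to take $h_{1,\lambda}$ to be its solution that is regular at $r=0$. Set $\mu:=K_0(n-1)^2/4+\lambda>0$; for $0<r<\tfrac12$ equation \eqref{Gbes} reads
\begin{equation*}
  h''(r)+\frac{n-1}{r}\,h'(r)+\mu\,h(r)=0 .
\end{equation*}
The substitution $h(r)=(\sqrt{\mu}\,r)^{1-n/2}\,w(\sqrt{\mu}\,r)$ turns this into the standard Bessel equation $s^2w''+sw'+(s^2-\nu^2)w=0$ of order $\nu=\tfrac n2-1$. I would therefore \emph{define}
\begin{equation*}
  h_{1,\lambda}(r):=(\sqrt{\mu}\,r)^{\,1-n/2}\,J_{\frac n2-1}\!\bigl(\sqrt{\mu}\,r\bigr),
\end{equation*}
which by construction depends only on $r$ and solves \eqref{Gbes} on $(0,\tfrac12]$. (Alternatively one can produce the same solution by a Frobenius power-series argument directly on the displayed ODE, which has a regular singular point at $r=0$.)

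Next I would record the regularity of this choice. From the series $J_\nu(s)=\sum_{k\ge0}\frac{(-1)^k}{k!\,\Gamma(k+\nu+1)}\bigl(s/2\bigr)^{2k+\nu}$ one sees that $s^{-\nu}J_\nu(s)=\sum_{k\ge0}\frac{(-1)^k}{k!\,\Gamma(k+\nu+1)}2^{-2k-\nu}s^{2k}$ is an even entire function of $s$; hence $h_{1,\lambda}(r)=G(\mu r^2)$ for an entire function $G$, so $h_{1,\lambda}$ is real-analytic on $[0,\infty)$ and in fact a smooth radial function on all of $\R^n$ with its Euclidean metric. (This smoothness at the origin is also precisely what is needed when the near-origin piece is later glued to the construction for $r>3$, and it is why one must select the $J$-solution rather than the singular $Y$-solution.) In particular $h_{1,\lambda}$ is continuous, hence bounded, on the compact interval $[0,\tfrac12]$.

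Finally, I would verify $h_{1,\lambda}\in L^2(M_1,dv_g)$. Since $f_1(r)=r$ on $(0,\tfrac12)$, the Riemannian volume element is $dv_g=r^{n-1}\,dr\,dg_0$, and therefore
\begin{equation*}
  \int_{M_1}|h_{1,\lambda}|^2\,dv_g=\mathrm{vol}\bigl(S^{n-1}(1)\bigr)\int_0^{1/2}|h_{1,\lambda}(r)|^2\,r^{n-1}\,dr\le \mathrm{vol}\bigl(S^{n-1}(1)\bigr)\,\frac{(1/2)^n}{n}\,\sup_{[0,1/2]}|h_{1,\lambda}|^2<\infty .
\end{equation*}
This completes the argument. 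There is no real obstacle here: the only points requiring a little care are carrying out the substitution and identifying the order $\nu=\tfrac n2-1$ correctly, and choosing the regular Bessel solution so that $h_{1,\lambda}$ is genuinely smooth at the origin; the $L^2$ bound itself is automatic because $M_1$ has finite volume and the chosen solution is bounded on it.
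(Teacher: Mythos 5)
Your proposal is correct and is essentially the paper's argument: the paper also selects the solution of the radial ODE that is regular at $r=0$ (via the Frobenius indicial equation $s(s-1)+(n-1)s=0$ with the root $s=0$, giving a power series $\sum_j c_j r^j$), and then concludes $L^2(M_1,dv_g)$ membership from boundedness near the origin together with $dv_g=r^{n-1}dr\,dg_0$. Your explicit identification of this solution as $(\sqrt{\mu}\,r)^{1-n/2}J_{n/2-1}(\sqrt{\mu}\,r)$ is just the closed form of the same regular series solution, which the paper itself alludes to when it says the eigenfunctions near the origin are Bessel functions.
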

  \begin{proof}
  Let us consider the ODE,
  \begin{equation}\label{Gapr410}
    u^{\prime\prime}+
    \frac{n-1}{r} u^{\prime} +( K_0(n-1)^2/4+\lambda )u=0.
  \end{equation}
  By the Frobenius method,  \eqref{Gapr410} has  a power series solution of the form,
  \begin{equation*}
    u(r)=\sum_{j=0}^{\infty} c_jr^{j+s},
  \end{equation*}
  and $s$ satisfies
  \begin{equation*}
    s(s-1)+(n-1)s=0.
  \end{equation*}
  It implies we can let $s=0$ in \eqref{Gapr410}.
  Thus \eqref{Gapr410} has a solution of the form
  \begin{equation}\label{Gapr411}
   u(r)=\sum_{j=0}^{\infty} c_jr^{j}.
  \end{equation}
  By the definition $dv_g=r^{n-1}dr dg_{0}$ for $0\leq r<\frac{1}{2}$.
  This implies the solution given by \eqref{Gapr411} is in $L^2(M_1,dv_g)$.
  \end{proof}
  \section{Inductive Construction}
We reformulate Theorem \ref{2}  in a more convenient way.

\begin{theorem}\label{Thembedfinite}
 Suppose $K_0\geq 0.$
 Let  $\{\lambda_j>0\}$ be a finite set of distinct  numbers. There exists  a  rotationally symmetric Riemannian manifold
 $(M_n,g) = \bigl( {  \R}^n, dr^2 + f_1^2(r) g_{S^{n-1}(1)} \bigr)$ such that the following holds,
\begin{enumerate}
   \item
   $\sigma_{{\rm ess}}(-\Delta_g)=\sigma_{{\rm ac}}(-\Delta_g)=\left[\frac{K_0}{4}(n-1)^2,\infty \right)$,
   \item$\left\{  \frac{K_0}{4}(n-1)^2  + \lambda_j \right\} \subset \sigma_{\rm p} ( -\Delta_g ) \cap \left(\frac{K_0}{4}(n-1)^2 , \infty \right)  $,
   \item $ K_{{\rm rad}}(r)+K_0=O(r^{-1})$ as $r\to \infty$.
\end{enumerate}
 \end{theorem}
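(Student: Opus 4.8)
The plan is to build $(M_n,g)=\bigl(\R^n,dr^2+f_1^2(r)g_{S^{n-1}(1)}\bigr)$ that is Euclidean ($f_1(r)=r$) for $r\le\frac12$, an arbitrary smooth positive interpolation on $[\frac12,3]$, and on $[3,\infty)$ is governed by $S=f_1'/f_1=\sqrt{K_0}+\widetilde V$ with $\widetilde V$ assembled block-by-block from Wigner--von Neumann profiles via Theorem \ref{Twocase}. Since the radial part of the metric is already arc length, the Liouville substitution $y=f_1^{(n-1)/2}h$ turns the radial equation $-\Delta_g h=(\tfrac{(n-1)^2}{4}K_0+\lambda)h$ into $-y''+qy=(\tfrac{(n-1)^2}{4}K_0+\lambda)y$ with $q=\tfrac{(n-1)^2}{4}S^2+\tfrac{n-1}{2}S'$, which on $[3,\infty)$ is exactly the $\widetilde q$ of Theorem \ref{Twocase}, and $h\in L^2(M,dv_g)$ iff $y\in L^2(dr)$ because $dv_g=f_1^{n-1}dr\,dg_0$. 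Near $O$, Theorem \ref{Thbes} provides for each $\lambda_j$ a solution $h_{1,\lambda_j}$ which is smooth as a radial function on $\R^n$ (its Frobenius recursion $j(j+n-2)c_j=-Ec_{j-2}$ forces the odd coefficients to vanish) and lies in $L^2$ near $O$; propagating it through the fixed smooth region $[\frac12,3]$ yields boundary data, equivalently a Pr\"ufer angle $\theta_j^{(0)}$ of $\bigl(y_{\lambda_j},\tfrac1{\sqrt{\lambda_j}}y_{\lambda_j}'\bigr)$, at $r=3$.

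On $[3,\infty)$ I would fix $b=-K^*$ with $K^*:=\max_{1\le j\le k}K(\lambda_j,A\setminus\{\lambda_j\},K_0)$ and $C^*:=\max_j C(\lambda_j,A\setminus\{\lambda_j\},K_0)$, partition $[3,\infty)=\bigcup_{m\ge1}[r_{m-1},r_m]$ with $r_0=3$ and $r_m+K^*=2(r_{m-1}+K^*)$, and assign block $m$ the active energy $\lambda_{j_m}$ with $j_m\equiv m\pmod k$. Inductively, on block $m$ I apply Theorem \ref{Twocase} --- with the exponent $100$ replaced by a suitable fixed $N=N(k)$, which is legitimate since the decay rate $x^{-a}$ in Theorem \ref{thasy1}(2) is controlled by the size of the Wigner--von Neumann coefficient, so only $C^*,K^*$ change --- with $\lambda=\lambda_{j_m}$, $A=\{\lambda_i:i\ne j_m\}$, $x_0=r_{m-1}$, $x_1=r_m$, and $\theta_0$ the angle of $\bigl(y_{\lambda_{j_m}},\tfrac1{\sqrt{\lambda_{j_m}}}y_{\lambda_{j_m}}'\bigr)$ at $r_{m-1}$ already determined by the preceding blocks; the hypothesis $r_{m-1}+K^*>K^*\ge K(\lambda_{j_m},\ldots)$ holds automatically. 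This produces $\widetilde V_m\in C^\infty_c(r_{m-1},r_m)$ with $|\widetilde V_m|,|\widetilde V_m'|\le C^*/(r+K^*)$; I set $S=\sqrt{K_0}+\sum_m\widetilde V_m\in C^\infty[3,\infty)$ and $f_1(r)=f_1(3)\exp\int_3^r S$. Choosing the interpolation on $[\frac12,3]$ so that its jet at $r=3$ agrees with $f_1(3)e^{\sqrt{K_0}(r-3)}$ makes $f_1$ globally smooth and positive, and $K_{\rm rad}+K_0=-(2\sqrt{K_0}\widetilde V+\widetilde V^2+\widetilde V')=O(1/r)$, giving (3).

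For (2): each $h_{1,\lambda_j}$ extends to a global classical solution of $-\Delta_g h=(\tfrac{(n-1)^2}{4}K_0+\lambda_j)h$, and on $[3,\infty)$ its transform $y_{\lambda_j}=f_1^{(n-1)/2}h_{1,\lambda_j}$ is the $y_\lambda$ of Theorem \ref{Twocase} on active blocks and a $y_{\lambda_j}$ on inactive ones. With $a_m:=\log_2\|(y_{\lambda_j}(r_m),\tfrac1{\sqrt{\lambda_j}}y_{\lambda_j}'(r_m))\|$, estimates \eqref{Key2},\eqref{Controlallx1},\eqref{Key1} give $a_m\le a_{m-1}+1$ on inactive blocks, $a_m\le a_{m-1}-N$ on active blocks (using $\log_2\frac{r_m+K^*}{r_{m-1}+K^*}=1$), and $\|(y_{\lambda_j}(r),\cdot)\|\le 2\cdot 2^{a_{m-1}}$ throughout block $m$; hence $a_m\le a_0+C(k)-m(N+1-k)/k$, and choosing $N$ with $2(N+1)>3k$ (e.g. $N=2k$) yields $\sum_m(r_m-r_{m-1})\cdot 4\cdot 4^{a_{m-1}}\le C(j,k)\sum_m2^{(m-1)(3-2(N+1)/k)}<\infty$, so $y_{\lambda_j}\in L^2[3,\infty)$, thus $h_{1,\lambda_j}\in L^2(M,dv_g)$, and by essential self-adjointness $\tfrac{(n-1)^2}{4}K_0+\lambda_j\in\sigma_{\rm p}(-\Delta_g)\cap(\tfrac{(n-1)^2}{4}K_0,\infty)$. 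For (1): the geometric block growth also makes $q-\tfrac{(n-1)^2}{4}K_0=\tfrac{(n-1)^2}{4}(2\sqrt{K_0}\widetilde V+\widetilde V^2)+\tfrac{n-1}{2}\widetilde V'$ lie in $L^2[3,\infty)$ (on block $m$ it is $O(1/(r+K^*))$ on an interval of length $r_m-r_{m-1}=r_{m-1}+K^*$, contributing $O(2^{-m})$ to the squared norm), while being bounded and $\to0$; since bounded potentials vanishing at infinity are relatively compact with respect to $-d^2/dr^2$ on the half line, $\sigma_{\rm ess}(-\Delta_g)=[\tfrac{(n-1)^2}{4}K_0,\infty)$, and by the Deift--Killip theorem applied on $[3,\infty)$, $\sigma_{\rm ac}(-\Delta_g)=[\tfrac{(n-1)^2}{4}K_0,\infty)$.

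The main obstacle is precisely the inductive accounting of the previous paragraph. Theorem \ref{Twocase} controls, on a single sufficiently long block, the active solution (geometric decay) and the finitely many inactive ones (at most doubling, simultaneously for all boundary conditions); but concatenating blocks causes the inactive-block doublings to accumulate against the active-block decay, so the block growth rate and the decay exponent $N$ must be chosen jointly in terms of the number $k$ of prescribed eigenvalues to force every $y_{\lambda_j}$ to be genuinely $L^2$ rather than merely $o(1)$. This double induction --- outer over rounds of $k$ blocks, inner over the energies, feeding the Pr\"ufer angles forward --- together with the quantitative version of Theorem \ref{Twocase} underlying it is the technical heart of the argument; by contrast the Bessel/Frobenius analysis at the origin, the smooth matching on $[\frac12,3]$, and the $L^2$-potential input for the spectrum are comparatively routine.
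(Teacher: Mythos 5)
Your proposal is correct and follows essentially the same route as the paper: rotational symmetry reducing everything to a half-line Schr\"odinger operator, the Frobenius/Bessel solution near $O$, propagation of its Pr\"ufer data to $r=3$, and then a block-by-block application of Theorem \ref{Twocase} on $[3,\infty)$ with one ``active'' energy per block and the boundary angle $\theta_0$ fed forward, so that the active solution decays geometrically while the inactive ones at most double. The only substantive difference is the bookkeeping that resolves the decay-versus-doubling competition: the paper keeps the exponent $100$ in \eqref{Key2} fixed and instead makes the block ratio $\frac{x_1-b}{x_0-b}$ huge (of order $C_{k+1}\geq 4^{N(k+1)}$, via the shift $b=tT_{k+1}$ and lengths $T_{k+1}=T_kC_{k+1}$), whereas you fix the block ratio at $2$ (via $b=-K^*$ and doubling blocks) and enlarge the exponent to $N=2k$, which requires restating Theorem \ref{Twocase} with a $k$-dependent Wigner--von Neumann amplitude --- a legitimate and trivial modification, as you note, since the decay rate $x^{-a}$ in Theorem \ref{thasy1}(2) scales with the amplitude. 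Your endgame for item (1) also differs (Deift--Killip plus relative compactness, exploiting $q-\frac{(n-1)^2}{4}K_0\in L^2$, versus the paper's citation of Kumura for $\sigma_{\rm ess}$ and Kiselev/Christ--Kiselev for $\sigma_{\rm ac}$ under the weaker bound $O(\ln r/r)$); both suffice in the finite case, though yours would not survive the passage to countable $A$, where $C_k\to\infty$ destroys the $L^2$ bound on $q$. One point worth making explicit in either treatment: the identification of $\sigma_{\rm ess}(-\Delta_g)$ for the full (not merely radial) Laplacian requires the decomposition into spherical harmonics or a geometric criterion such as Kumura's; your one-line relative-compactness remark addresses only the radial sector.
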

 \begin{theorem}\label{Thembedcountable}
 Suppose $K_0\geq 0$.
Let  $\{\lambda_j>0\}$ be a  countable set of    distinct  numbers. Let $C(r)>0$ be   any 
function  on  $(0,\infty)$  with $  \lim_{r\to \infty}C(r) = \infty$. Then  there exists  a  rotationally symmetric  Riemannian manifold
 $(M_n,g) = \bigl( {  \R}^n, dr^2 + f_1^2(r) g_{S^{n-1}(1)} \bigr)$ such that the following holds,
\begin{enumerate}
   \item
   $\sigma_{{\rm ess}}(-\Delta_g)=\sigma_{{\rm ac}}(-\Delta_g)=\left[\frac{K_0}{4}(n-1)^2,\infty \right)$,
   \item$\left\{  \frac{K_0}{4}(n-1)^2 + \lambda_j \right\} \subset \sigma_{\rm p} ( -\Delta_g ) \cap \left( \frac{K_0}{4}(n-1)^2 , \infty \right)  $,
   \item $ |K_{{\rm rad}}(r)+K_0|\leq \frac{C(r)}{r}$ as $r\to \infty$.
\end{enumerate}
 \end{theorem}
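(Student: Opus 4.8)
The plan is to realize the manifold as a warped product whose radial Laplacian, after the Liouville normalization, is the one‑dimensional operator $-D^2+\widetilde q$ of Theorem \ref{Twocase}, and to build $\widetilde q$ — equivalently $\widetilde V$, equivalently $S=f_1'/f_1=\sqrt{K_0}+\widetilde V$ — by a doubly inductive stacking of the intervals supplied by that theorem.

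\emph{Setup near the origin.} Take $f_1(r)=r$ on $[0,\tfrac12]$, so Theorem \ref{Thbes} furnishes, for each $\lambda_j$, the regular even solution $h_{1,\lambda_j}$ of \eqref{Gbes}, automatically in $L^2$ near $O$ against $dv_g=r^{n-1}dr\,dg_0$ and smooth at $O$. On the fixed interval $[\tfrac12,3]$ pick any smooth positive $f_1$ whose $S$ matches $1/r$ to infinite order at $\tfrac12$ and matches $\sqrt{K_0}$ (a small positive constant when $K_0=0$) to infinite order at $3$; propagating each $h_{1,\lambda_j}$ through \eqref{0lap1} yields well-defined Pr\"ufer data at $r=3$. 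On $[3,\infty)$ we prescribe $\widetilde V$, taking it to vanish (a purely hyperbolic, resp.\ cylindrical, piece) off the ``active'' intervals introduced below; since $\widetilde V$ and all derivatives vanish at their endpoints, the resulting $f_1$ is $C^\infty$ and $(\R^n,dr^2+f_1^2g_{S^{n-1}(1)})$ is a smooth manifold.

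\emph{The double induction.} Enumerate $A=\{\lambda_1,\lambda_2,\dots\}$; block $k$ will consist of $k$ consecutive intervals. On the first interval of block $k$ apply Theorem \ref{Twocase} with $\lambda=\lambda_k$, $A=\{\lambda_1,\dots,\lambda_{k-1}\}$ and $\theta_0$ the current Pr\"ufer angle — determined by everything already built — of the solution descending from $h_{1,\lambda_k}$; on the $i$-th interval ($2\le i\le k$) apply it with $\lambda=\lambda_{i-1}$, $A=\{\lambda_1,\dots,\lambda_k\}\setminus\{\lambda_{i-1}\}$ and $\theta_0$ the current angle of the solution descending from $h_{1,\lambda_{i-1}}$. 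By \eqref{Key2} the normalized solution vector of the target energy of an interval is scaled by at most $2(x_1/x_0)^{-100}$ across it, by \eqref{Controlallx1} it stays within a factor $2$ inside it, and by \eqref{Key1} every solution for the non-target energies grows by at most $2$ on it. Hence across block $k$ the solution descending from $h_{1,\lambda_i}$ is scaled by at most $\rho_k\cdot 2^{k-1}$, where $\rho_k$ is the decay on $\lambda_i$'s own interval in that block. Choose the block locations and the ratios $x_1/x_0$ so that simultaneously: (a) every $x_0$ clears the threshold $K(\lambda,A,K_0)$ of Theorem \ref{Twocase}; (b) the largest constant $C(\lambda,A,K_0)$ arising in block $k$, call it $\hat C_k$, satisfies $\hat C_k\le C(r)$ for all $r$ in block $k$ — possible since $C(r)\to\infty$ forces $\{r:C(r)\ge \hat C_k\}$ to contain a half-line; (c) $\prod_{k\ge i}\rho_k 2^{k-1}\to0$ fast enough to beat the (at most polynomially growing) lengths of the blocks, so that for every $i$ the solution descending from $h_{1,\lambda_i}$ is square-integrable on $(3,\infty)$; and (d) $\sum_k \hat C_k^2/(\text{start of block }k)<\infty$ with $\hat C_k/(\text{start of block }k)\to0$. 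Each of (a)--(d) merely asks that the blocks be placed far enough out and the ratios taken large enough, so a single rapidly increasing choice of block positions works.

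\emph{Conclusion.} By (d), $\widetilde q-\tfrac{(n-1)^2}{4}K_0\in L^2(3,\infty)$ and tends to $0$, so by the theory of $L^2$ Wigner--von Neumann type perturbations the radial part has $\sigma_{\rm ac}=\sigma_{\rm ess}=[\tfrac{K_0}{4}(n-1)^2,\infty)$; since the higher angular sectors only shift the spectrum upward while $-\Delta_g\ge0$, the same identities hold for $-\Delta_g$. The relation $K_{\rm rad}+K_0=-(2\sqrt{K_0}\widetilde V+\widetilde V^2+\widetilde V')$ together with (b) gives $|K_{\rm rad}+K_0|\le C(r)/r$. By (c), for each $i$ the solution descending from $h_{1,\lambda_i}$ has vector norm tending to $0$ and lies in $L^2(3,\infty)$; undoing the Liouville transformation (isometrically, $\|h\|_{L^2(dv_g)}=\|\psi\|_{L^2(dr)}$) it extends $h_{1,\lambda_i}$ to a smooth $L^2(dv_g)$ eigenfunction of $-\Delta_g$ at energy $\tfrac{K_0}{4}(n-1)^2+\lambda_i$. (The half-line statement, Theorem \ref{Halfline}, is the same scheme with the piece on $[0,3]$ replaced by prescribing the angle $\theta_j$ directly at $0$.) The main obstacle is the joint bookkeeping hidden in (b)--(c): each already-installed eigenfunction gets jostled by a factor up to $2$ on each of the $\sim k$ non-target intervals of block $k$, for every $k$, so these growths must be overwhelmed by its own decay factors $\rho_k$, while the constants $\hat C_k$ forced by Theorem \ref{Twocase} must stay under the envelope $C(r)$; this is precisely where the hypothesis $\lim_{r\to\infty}rC(r)=\infty$ is used, and it is why the countable case needs the piecewise, inductively balanced Wigner--von Neumann construction rather than a single closed-form potential.
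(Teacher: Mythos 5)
Your proposal is correct and follows essentially the same route as the paper: Bessel solutions near the origin, a smooth interpolation on a fixed compact interval, and a double induction stacking the intervals of Theorem \ref{Twocase} so that each eigenvalue's solution decays on its own interval while all others grow by at most a factor of $2$, with boundary conditions rematched at each junction and the envelope $C(r)$ absorbed by pushing the blocks far enough out. The only caveat is quantitative: since $|K_{{\rm rad}}+K_0|\lesssim \hat{C}_k^{\,2}/(x-b)$ and $x-b$ can be much smaller than $x$, your condition (b) should bound $\hat{C}_k^{\,2}$ times the interval-to-base-point ratio (not just $\hat{C}_k$) by $C(r)$ --- the same bookkeeping the paper carries out via \eqref{Gapr95} --- which your scheme accommodates without change.
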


  We will first construct $f_1$ near the origin, and then extend it
  inductively, adding one segment at a time.

For $r\leq \frac{1}{2}$, let
  \begin{equation}\label{Gf10}
    f_1(r)= r.
  \end{equation}
   For $r\in[1,3]$, let
  \begin{equation}\label{Gf10}
    f_1(r)= e^{\sqrt{K_0}(r-1)}.
  \end{equation}
  We extend $f_1(r)$ to $(0,3)$  so that $f_1(r)>0$  and $f_1\in C^{\infty}(0,3]$. Suppose $K_0\geq 0$.

  For any function $f(x)$ on $[0,\infty]$ such that $\text{supp} f\subset (3,\infty)$ and $f\in C^{\infty} (3,\infty)$, if
  for $r\geq 1$,  we let
  \begin{equation}\label{Gf11}
    f_1(r)=\exp(\int_1^r (\sqrt{K_0}+f(x))dx),
  \end{equation}
  then
  $f_1\in C^{\infty}(0,\infty)$ and  for $r\in[1,3]$ \eqref{Gf10} holds.

  Our objective is to construct   $f(x)$ so that Riemannian manifold $(M, g)
     = \Bigl( \R^+ \times S^{n-1}(1)\cup \{O\}
     , dr^2 + f_1^{\,2}(r)g_{S^{n-1}(1)} \Bigr)$ satisfies Theorem \ref{Thembedfinite} or Theorem \ref{Thembedcountable}.

  Set for $r\geq 1$,
     \begin{align}
     S(r)   := & \frac{f_1'(r)}{f_1(r)}=\sqrt{K_0}+f(r),\label{DeSr}\\
     K(r)   := & -\frac{f_1''(r)}{f_1(r)}=-(\sqrt{K_0}+f(r))^2-f^{\prime}(r)\nonumber\\
     = &- {K_0}-2\sqrt{K_0}f(r)-f^2(r)-f^{\prime}(r),\label{DeKr}\\
     q(r) := & \frac{(n-1)(n-3)}{4}S(r)^2 - \frac{(n-1)}{2}K(r)=\frac{(n-1)^2}{4}(\sqrt{K_0}+f(r))^2+\frac{n-1}{2}f^{\prime}(r).\label{Deq_0r}
\end{align}
Thus we have
\begin{eqnarray}
   S(r) -\sqrt{K_0} &=& f(r)\label{Gapr91} \\
   K(r) -K_0 &=&  -2\sqrt{K_0}f(r)-f^2(r)-f^{\prime}(r) \label{Gapr92}\\
  q(r)-\frac{(n-1)^2}{4}K_0 &=&  \frac{(n-1)^2}{2}\sqrt{K_0}f(r)+\frac{(n-1)^2}{4}f^2(r)+\frac{n-1}{2}f^{\prime}(r).\label{Gapr93}
\end{eqnarray}
\begin{remark}\label{K01}
Actually  $K(r)$ is the radial curvature and $\Delta r=(n-1)S(r)$.
\end{remark}

Let  $h_{1,\lambda}$ be given by Theorem \ref{Thbes} on $(0,\frac{1}{2}]$ and we extend it to $[0,1]$ by solving
\begin{align*}
    - \Delta_{g} \bigl( h_{1,\lambda}(r)  \bigr)
    = -\left\{ \frac{\partial ^2}{\partial r^2}
    + (n-1)\frac{f_1'(r)}{f_1(r)} \frac{\partial }{\partial r} \right\} h_{1,\lambda}(r)
    =  \left( \frac{(n-1)^2}{4}{K_0} + \lambda \right) h_{1,\lambda}(r).
\end{align*}

 Suppose there exists a nontrivial solution $w _{\lambda}(x) \in L^2([1,\infty),dx)$ to the equation
\begin{align}\label{eigene}
     \left( -\frac{d^2}{dx^2} + q(x) - \frac{(n-1)^2}{4} K_0\right) w_{\lambda} (x)
     =\lambda w_{\lambda} (x),
\end{align}
with boundary condition
\begin{equation}\label{1boundary}
    \frac{w_{\lambda}^\prime(1)}{w_{\lambda}(1)}= \frac{h_{1,\lambda}^\prime(1)}{h_{1,\lambda}(1)}+\frac{n-1}{2}\sqrt{K_0}.
\end{equation}
Using this function $w_{\lambda}$, we define a function $h_{2,\lambda}$ by
\begin{align}\label{Gapr97}
   h_{2,\lambda} := f_1^{-\frac{n-1}{2}}w_{\lambda} .
\end{align}
It is easy to verify that
\begin{equation}\label{1boundaryma}
    \frac{h_{2,\lambda}^\prime(1)}{h_{2,\lambda}(1)}=\frac{h_{1,\lambda}^\prime(1)}{h_{1,\lambda}(1)}.
\end{equation}
A direct computation shows that the function $h_{2,\kappa} \bigl( r \bigr)$ for $r\geq 1$ satisfies the eigenvalue equation on $(M,g)$:
\begin{align*}
    - \Delta_{g} \bigl( h_{2,\lambda}(r)  \bigr)
    = -\left\{ \frac{\partial ^2}{\partial r^2}
    + (n-1) S(r) \frac{\partial }{\partial r} \right\} h_{2,\lambda}(r)
    =  \left( \frac{(n-1)^2}{4}{K_0} + \lambda \right) h_{2,\lambda}(r)
\end{align*}
and $h_{2,\lambda}(r)\in L^2(M,dv_{g})$, where $dv_g=f_1^{n-1}(r)drdg_0$ for $r\geq 1$.

Define $h_{ \lambda}(r)=h_{1,\lambda}(r)$ for $r\leq 1$ and $h_{\lambda}(r)=\frac{h_{1,\lambda}(1)}{h_{2,\lambda}(1)}h_{2,\lambda}(r)$ for $r\geq 1$.
Combining with (\ref{1boundaryma}), we have
for all $r>0$
\begin{align}\label{eigenor}
    - \Delta_{g} \bigl( h_{\lambda}(r)  \bigr)
    =  \left( \frac{(n-1)^2}{4} {K_0}+ \lambda \right) h_{\lambda}(r).
\end{align}
Thus $\frac{(n-1)^2}{4}{K_0}+\lambda$ is an eigenvalue and $h_{\lambda}$ is the corresponding eigenfunction.

Now given a set   $\{\lambda_j>0\}$, we will construct $f(x)$ piecewise   of  the form as  in Theorem \ref{Twocase}, such that for any $j$,
there exists eigenfunction $w_{\lambda_j}(x) \in L^2([1,\infty),dx)$ that solves equation  (\ref{eigene}) with $\lambda=\lambda_j$ and satisfies the boundary condition
(\ref{1boundary}). By (\ref{eigenor}), $\{\frac{(n-1)^2}{4}{K_0}+\lambda_j\}$ are the eigenvalues of Laplacian $\Delta_{g}$.


Let $N(k)\in \Z^+$ be a non-decreasing  function  on $\Z^+$, $N(1)=1$
($N(k)$ that we choose will be growing  very slowly).
Let $A_k= \{\lambda_1,\lambda_2,\cdots,\lambda_{N(k)}\}$.
For $k\geq 1$,
let
\begin{equation}\label{DeCKk}
 K_{k}=10+\max_{1\leq j\leq N(k)}K(\lambda_j,A_k\backslash \{\lambda_j\},K_0),
\end{equation}
and
\begin{equation}\label{DeCk}
 C_{k}=10+4^{N(k)}+ N(k)^{100}+K_k+\max_{1\leq j\leq N(k)}C(\lambda_j,A_k\backslash \{\lambda_j\},K_0),
\end{equation}
where the $C$ in \eqref{DeCk} and $K$ in\eqref{DeCKk} are given by Theorem \ref{Twocase}.

Define   $T_0=1$  and   $T_{k}=T_{k-1}C_{k}$. Let $J_k=\sum_{i}^kN(i) T_i $.
Then we have
\begin{equation}\label{cnnov}
    C_k\geq 4^{N(k)}, C_k\geq N(k)^{100},
\end{equation}
and
\begin{equation*}
    T_k\geq 10^k, T_k\geq K_k.
\end{equation*}

We can assure that $C_k  $  goes  to infinity arbitrarily slowly if we
choose appropriately slowly growing $N(k).$ We choose $N(k)$ to be the
largest integer such that
\begin{equation*}
    C_k\leq C\ln k,
\end{equation*}
and
\begin{equation}\label{Gapr95}
 2 C_{k+1}^4\leq C\min _{x\in [J_k,J_{k+1}]}\min\{C(x), \ln x\},
\end{equation}
where $C(x)$ is given by Theorem \ref{Thembedcountable} and $C=C(\lambda_1)$.
We then have $N(k)=N$ for sufficiently large $k$ in the construction
of Theorem \ref{Thembedfinite} and  $\lim_{k}N(k)=\infty$ in the construction of Theorem \ref{Thembedcountable}. 


We will also define function $f(x)$ ($\text{supp} f\subset (3,\infty)$ ) and $w_{\lambda_j}$(x), $j=1,2,\cdots $ on $(1,J_k)$ by induction, so that
\begin{enumerate}
\item
$w_{\lambda_j}(x)$ solves
\begin{align}\label{eigenengj}
     \left( -\frac{d^2}{dx^2} + q(x) - \frac{(n-1)^2}{4} \right) w _{\lambda_j}(x)
     =\lambda w_{\lambda_j} (x),
\end{align}
  for $x\in (1,J_k)$  where $q(x)$ on $(1,J_k)$ is given by (\ref{Deq_0r}),
 and satisfies  boundary condition
\begin{equation}\label{1boundaryn}
    \frac{w_{\lambda_j}^\prime(1)}{w_{\lambda_j}(1)}=\frac{h_{1,\lambda_j}^\prime(1)}{h_{1,\lambda_j}(1)}+\frac{n-1}{2}\sqrt{K_0},
\end{equation}
where $h_{1,\lambda_j}$ is given by (\ref{Gbes}).
\item
$w_{\lambda_i}(x)$  for $i=1,2,\cdots, N(k)$ and $k\geq 2$, satisfies
\begin{eqnarray}
 ||\left(\begin{array}{c}
      w_{\lambda_i}(J_{k}) \\
      \frac{1}{\sqrt{\lambda_i}}w_{\lambda_i}^{\prime}(J_{k})
    \end{array}
    \right)||
  &\leq& 2^{N(k)+1}  N(k)^{100} C_{k}^{-99}||\left(\begin{array}{c}
      w_{\lambda_i}(J_{k-1}) \\
     \frac{1}{\sqrt{\lambda_i}} w_{\lambda_i}^{\prime}(J_{k-1}) \end{array}
    \right)|| .\label{eigenj}
\end{eqnarray}

\item  ${\rm supp} f \subset (J_{k-1},J_k)$ and $f\in C^{\infty}(J_{k-1},J_k)$, and
\begin{align}
   | S(x) -\sqrt{K_0}|  \leq  & 2\frac{N(k)C^2_k}{x},\label{controlsr}\\
   | K(x) + K_0| \leq & (4\sqrt{K_0}+8)\frac{N(k)C^2_k}{x},\label{controlkr}
\end{align}
for $x\in (3,J_k)$,
where  $S(x)$ and $K(x)$ are given by \eqref{DeSr}, \eqref{DeKr} respectively.
\end{enumerate}

By our construction, one has
\begin{eqnarray}
  \frac{J_k}{T_{k+1}} &= &  \frac{\sum_{i}^kN(i)T_i}{T_{k+1}} \\
  &\leq &  \frac{N(k) }{C_{k+1}}\sum_{i=1}^k\frac{T_i}{T_k}\\
  &\leq & 2\frac{N(k) }{C_{k+1}}.\label{Tk1Jk}
\end{eqnarray}

Step 1: Let $f(x)=0$ on $(2,J_1)$. Then $q(x)$ given by (\ref{Deq_0r}) is well defined on $(2,J_1)$.

 Let  $w_{\lambda_j} (x) $, $j=1,2,\cdots$  for $x\in(1,J_1]$ be solutions of  the equation
\begin{align}\label{eigenengg}
     \left( -\frac{d^2}{dx^2} + q(x) - \frac{(n-1)^2}{4} K_0\right) w _{\lambda_j}(x)
     =\lambda_j w_{\lambda_j} (x),
\end{align}
with boundary condition
\begin{equation}
    \frac{w_{\lambda_j}^\prime(1)}{w_{\lambda_j}(1)}=\frac{h_{1,\lambda_j}^\prime(1)}{h_{1,\lambda_j}(1)}+\frac{n-1}{2}\sqrt{K_0},
\end{equation}
where $h_{1,\lambda_j}$ is given by (\ref{Gbes}). 

Step $k+1,$ for $k\geq 1$:

 Suppose  we completed the construction  of $f(x)$ for step $k$. That
 is we have defined $f(x),w_{\lambda_j}(x),q(x)$ on $(1,J_k)$.

 Denote $B_{k+1}=\{\lambda_i\}_{i=1}^{N(k+1)}$.
Applying  Theorem  \ref{Twocase} to $x_0=J_k$, $x_1=J_k+T_{k+1}$, $b=0$, $\lambda=\lambda_1$, $\tan\theta_0=\frac{w_{\lambda_1}^\prime(J_k)}{w_{\lambda_1}(J_k)}$ and $A= B_{k+1}\backslash \{\lambda_1\}$,  we can define
$f(x)=\widetilde{V}(x,\lambda_1,B_{k+1}\backslash \{\lambda_1\},J_k,J_{k}+T_{k+1},0,\theta_0)$  for $x\in (J_k, J_k+T_{k+1}]$.
Thus we can define $w_{\lambda_j}(x) $ on $(0,J_{k}+T_{k+1})$  for all possible $j$.
Since the boundary condition of $w_{\lambda_1}(x)$ matches at the point $ J_k$ (guaranteed by $\tan\theta_0=\frac{w_{\lambda_1}^\prime(J_k)}{w_{\lambda_1}(J_k)}$) and  by Theorem  \ref{Twocase}, one has

 \begin{enumerate}
\item
$w_{\lambda_1}(x)$   solves
\begin{align}\label{eigenen2}
     \left( -\frac{d^2}{dx^2} + q(x) - \frac{(n-1)^2}{4} K_0\right) w _{\lambda_1}(x)
     =\lambda_1 w_{\lambda_1} (x),
\end{align}
 for $x\in (1,J_k+T_{k+1})$,
and satisfies the boundary condition   $\frac{w_{\lambda_1}^\prime(1 )}{w_{\lambda_1}(1)}=\frac{h_{1,\lambda_1}^\prime(1)}{h_{1,\lambda_1}(1)}+\frac{n-1}{2}\sqrt{K_0}$.
\item
$w_{\lambda_1}(x)$ satisfies
\begin{eqnarray}
 ||\left(\begin{array}{c}
      w_{\lambda_1}(J_k+T_{k+1}) \\
      \frac{1}{\sqrt{\lambda_1}}w_{\lambda_1}^{\prime}(J_k+T_{k+1})
    \end{array}
    \right)|| &\leq& 2(\frac{J_k+T_{k+1}}{J_k})^{-100}||\left(\begin{array}{c}
      w_{\lambda_1}(J_k) \\
     \frac{1}{\sqrt{\lambda_1}} w_{\lambda_1}^{\prime}(J_k) \end{array}
    \right)||\nonumber \\
  &\leq& 2^{101} N(k)^{100} C_{k+1}^{-100}||\left(\begin{array}{c}
      w_{\lambda_1}(J_k) \\
     \frac{1}{\sqrt{\lambda_1}} w_{\lambda_1}^{\prime}(J_k) \end{array}
    \right)||\nonumber\\
     &\leq&  N(k)^{100} C_{k+1}^{-99}||\left(\begin{array}{c}
      w_{\lambda_1}(J_k) \\
     \frac{1}{\sqrt{\lambda_1}} w_{\lambda_1}^{\prime}(J_k) \end{array}
    \right)||\label{Key3}
\end{eqnarray}
where the second inequality holds by \eqref{Tk1Jk}.
At the same time, by \eqref{Key1}, the solutions for  $\left( -\frac{d^2}{dx^2} + q(x) - \frac{(n-1)^2}{4} K_0\right) w _{\lambda_j}(x)
     =\lambda_j w_{\lambda_j} (x)$, $j=2,3,\cdots,N(k+1)$, satisfy,
     \begin{equation}\label{Gotherapr9}
      ||\left(\begin{array}{c}
      w_{\lambda_1}(J_k+T_{k+1}) \\
      \frac{1}{\sqrt{\lambda_1}}w_{\lambda_1}^{\prime}(J_k+T_{k+1})
    \end{array}
    \right)|| \leq 2||\left(\begin{array}{c}
      w_{\lambda_1}(J_k) \\
     \frac{1}{\sqrt{\lambda_1}} w_{\lambda_1}^{\prime}(J_k) \end{array}
    \right)||.
     \end{equation}

\end{enumerate}

 Suppose we have defined $f(x)$   on $(0,J_k+tT_{k+1}]$ for $t\leq N(k+1)-1$. Let us give the definition on $(0,J_k+(t+1)T_{k+1}]$.

 Applying  Theorem  \ref{Twocase}  to $x_0=J_k+tT_{k+1}$, $x_1=J_k+(t+1)T_{k+1}$, $b=tT_{k+1}$, $\lambda=\lambda_{t+1}$, $A=B_{k+1}\backslash \lambda_{t+1}$ and $\tan \theta_0=\frac{w_{\lambda_{t+1}}^\prime(J_k+tT_{k+1})}{w_{\lambda_{t+1}}(J_k+tT_{k+1})}$,  we can define
 $f(x)=\widetilde{V}(x,\lambda_{t+1}, B_{k+1}\backslash \lambda_{t+1}, J_k+tT_{k+1},J_k+(t+1)T_{k+1},tT_{k+1},\theta_0)$ on $x\in (J_k+tT_{k+1}, J_k+(t+1)T_{k+1})$. Thus we can define $w_{\lambda_j}(x) $ on $(1,J_{k}+(t+1)T_{k+1}]$  for all possible $j$.

Since the boundary condition of $w_{\lambda_{t+1}}(x)$ matches at the point $ J_k+tT_{k+1}$ (guaranteed by $\tan\theta_0=\frac{w_{\lambda_{t+1}}^\prime(J_k+tT_{k+1})}{w_{\lambda_{t+1}}(J_k+tT_{k+1})}$) and  by Theorem  \ref{Twocase}, one has

 \begin{enumerate}
\item
$w_{\lambda_{t+1}}(x)$     solves
\begin{align}\label{eigenen2}
     \left( -\frac{d^2}{dx^2} + q(x) - \frac{(n-1)^2}{4} K_0\right) w _{\lambda_{t+1}}(x)
     =\lambda_{t+1} w_{\lambda_{t+1}} (x),
\end{align}
for $x\in (1,J_k+(t+1)T_{k+1})$,
and satisfies the boundary condition   $\frac{w_{\lambda_{t+1}}^\prime(1 )}{w_{\lambda_{t+1}}(1)}=\frac{h_{1,\lambda_{t+1}}^\prime(1)}{h_{1,\lambda_{t+1}}(1)}+\frac{n-1}{2}\sqrt{K_0}$.
\item
$w_{\lambda_{t+1}}(x)$ satisfies
\begin{eqnarray}
 ||\left(\begin{array}{c}
      w_{\lambda_{t+1}}(J_k+(t+1)T_{k+1}) \\
      \frac{1}{\sqrt{\lambda_{t+1}}}w_{\lambda_{t+1}}^{\prime}(J_k+(t+1)T_{k+1})
    \end{array}
    \right)|| &\leq& 2(\frac{J_k+T_{k+1}}{J_k})^{-100}||\left(\begin{array}{c}
      w_{\lambda_{t+1}}(J_k+tT_{k+1}) \\
     \frac{1}{\sqrt{\lambda_{t+1}}} w_{\lambda_{t+1}}^{\prime}(J_k+tT_{k+1}) \end{array}
    \right)|| \nonumber\\
  &\leq&    N(k)^{100} C_{k+1}^{-99}||\left(\begin{array}{c}
      w_{\lambda_{t+1}}(J_k+tT_{k+1}) \\
     \frac{1}{\sqrt{\lambda_{t+1}}} w_{\lambda_{t+1}}^{\prime}(J_k+tT_{k+1}) \end{array}
    \right)||.\label{Key3}
\end{eqnarray}
At the same time, by \eqref{Key1}, the solutions for  $\left( -\frac{d^2}{dx^2} + q(x) - \frac{(n-1)^2}{4} K_0\right) w _{\lambda_j}(x)
     =\lambda_j w_{\lambda_j} (x)$, $j=1,2,3,\cdots,N(k+1)$ and $j\neq t+1$, satisfy,
     \begin{equation}\label{Gotherapr9}
      ||\left(\begin{array}{c}
      w_{\lambda_j}(J_k+(t+1)T_{k+1}) \\
      \frac{1}{\sqrt{\lambda_j}}w_{\lambda_j}^{\prime}(J_k+(t+1)T_{k+1})
    \end{array}
    \right)|| \leq 2||\left(\begin{array}{c}
      w_{\lambda_j}(tJ_k) \\
     \frac{1}{\sqrt{\lambda_j}} w_{\lambda_j}^{\prime}(tJ_k) \end{array}
    \right)||.
     \end{equation}
\end{enumerate}

 Thus we have defined $f(x)$ by
 induction in $t$  on each $(1, J_k+tT_{k+1})$ and therefore on $(1,J_k+N(k+1)T_{k+1})=(1,J_{k+1})$.

 Let us mention that for $x\in[J_k+tT_{k+1},J_k+(t+1)T_{k+1}]$ and $0\leq t\leq N(k+1)-1$,
 \begin{equation}\label{Consv}
    f(x)= \widetilde V\left(x,\lambda_{t+1}, B_{k+1}\backslash \{\lambda_{t+1}\},J_k+tT_{k+1},J_k+(t+1)T_{k+1}, tT_{k+1},\frac{w_{\lambda_{t+1}}^\prime(J_k+tT_{k+1})}{w_{\lambda_{t+1}}(J_k+tT_{k+1})}\right),
 \end{equation}
 where $\widetilde V$ is taken from Theorem \ref{Twocase}.

 Now we should show that our definition satisfies the $k+1$ step conditions \eqref{eigenengj}-\eqref{controlkr}.

 Let us consider  $||\left(\begin{array}{c}
      w_{\lambda_i}(x) \\
      \frac{1}{\sqrt{\lambda_i}}w_{\lambda_i}^{\prime}(x)
    \end{array}
    \right)|| $ for $i=1,2,\cdots,N(k+1)$.
$||\left(\begin{array}{c}
      w_{\lambda_i}(x) \\
      \frac{1}{\sqrt{\lambda_i}}w_{\lambda_i}^{\prime}(x)
    \end{array}
    \right)|| $ decreases  from   point  $J_k+(i-1)T_{k+1}$ to $J_k+iT_{k+1}$, $i=1,2,\cdots,N(k+1)$, and
  may increase from any point $J_k+(m-1)T_{k+1}$ to $J_k+mT_{k+1}$, $m=1,2,\cdots,N(k+1)$ and $m\neq i$, but no more than by a factor of 2.
  That is
  \begin{equation*}
   ||\left(\begin{array}{c}
      w_{\lambda_i}(J_k+iT_{k+1}) \\
      \frac{1}{\sqrt{\lambda_i}}w_{\lambda_i}^{\prime}(J_k+iT_{k+1})
    \end{array}
    \right)||\leq    N(k)^{100} C_{k+1}^{-99} ||\left(\begin{array}{c}
      w_{\lambda_i}(J_k+(i-1)T_{k+1}) \\
      \frac{1}{\sqrt{\lambda_i}}w_{\lambda_i}^{\prime}(J_k+(i-1)T_{k+1})
    \end{array}
    \right)|| ,
  \end{equation*}
  and  for $m\neq i$,
  \begin{equation*}
   ||\left(\begin{array}{c}
      w_{\lambda_i}(J_k+mT_{k+1}) \\
      \frac{1}{\sqrt{\lambda_i}}w_{\lambda_i}^{\prime}(J_k+mT_{k+1})
    \end{array}
    \right)||\leq 2 ||\left(\begin{array}{c}
      w_{\lambda_i}(J_k+(m-1)T_{k+1}) \\
      \frac{1}{\sqrt{\lambda_i}}w_{\lambda_i}^{\prime}(J_k+(m-1)T_{k+1})
    \end{array}
    \right)||
  \end{equation*}
  by  Theorem  \ref{Twocase}.

   Thus for $i=1,2,\cdots,N(k+1)$,
   \begin{equation*}
    ||\left(\begin{array}{c}
      w_{\lambda_i}(J_{k+1}) \\
      \frac{1}{\sqrt{\lambda_i}}w_{\lambda_i}^{\prime}(J_{k+1})
    \end{array}
    \right)|| \leq 2^{N(k+1)+1} N(k)^{100} C_{k+1}^{-99} ||\left(\begin{array}{c}
      w_{\lambda_i}({J_k}) \\
      \frac{1}{\sqrt{\lambda_i}}w_{\lambda_i}^{\prime}(J_k)
    \end{array}
    \right)|| .
   \end{equation*}

This implies (\ref{eigenj}) for $k+1$.
By the construction of $f(x)$ \eqref{Consv}, \eqref{thm141},  and \eqref{thm141nov} we have
 for $x\in[J_k+tT_{k+1},J_k+(t+1)T_{k+1}]$ and $0\leq t\leq N(k+1)-1$,
 \begin{eqnarray}
  |f^{\prime}(x)|, | f(x)| &\leq& \frac{C_{k+1}}{x-tT_{k+1}} \nonumber\\
    &\leq&  2 \frac{N(k+1)C^2_{k+1}}{x+1}.\label{Gapr94}
 \end{eqnarray}
 This implies \eqref{controlsr} and \eqref{controlkr} by \eqref{Gapr91} and \eqref{Gapr92}.

\section{Proof of Theorems \ref{Thembedfinite} and \ref{Thembedcountable}}
\begin{proof}
Fix a set $\{\lambda_j>0\}$.  We construct $f(x)$ such that
(\ref{eigenengj}), (\ref{1boundaryn}), (\ref{eigenj}), (\ref{controlsr}) and (\ref{controlkr}) hold for any $k$.
It is known that
\begin{align}
 & \Delta r = (n-1)S(r),  \\
  & K_{{\rm rad}} = K(r) .
\end{align}
By our construction, one has
\begin{equation*}
    \lim_{r\to\infty} \Delta r = \lim_{r\to\infty}(n-1)S(r)=(n-1)\sqrt{K_0}.
\end{equation*}
Thus by Theorem 1.2 in \cite{kumura1997essential},
we have
$$\sigma_{{\rm ess}}(-\Delta_g)=\left[\frac{(n-1)^2}{4}K_0,\infty \right).$$
Thus
\begin{equation*}
    \sigma_{{\rm ac}}(-\Delta_g)\subset [\frac{(n-1)^2}{4}K_0,\infty  ).
\end{equation*}
By  \eqref{Gapr93}  and \eqref{Gapr94}, one has for $r\in[J_{k-1},J_k]$,
\begin{equation*}
   | q(r)-\frac{(n-1)^2}{4}K_0|\leq O(1)\frac{N(k)C^2(k)}{r}.
\end{equation*}
By \eqref{Gapr95}, one  has for $r\in[J_{k-1},J_k]$,
\begin{equation*}
 \frac{N(k)C^2(k)}{r} = O(1) \frac{C_k^3}{r}= O(1)\frac{\ln r}{r}.
\end{equation*}
Thus
\begin{equation*}
  | q(r)-\frac{(n-1)^2}{4}K_0|= O(1)\frac{\ln r}{r}.
\end{equation*}
It implies (e.g. \cite{kis1,kis2}), for  the Schr\"odinger operator, $\sigma_{ac}(-D^2+q)=[\frac{(n-1)^2}{4} K_0,\infty  )$, and then
\begin{equation*}
    [\frac{(n-1)^2}{4} K_0,\infty  )\subset \sigma_{{\rm ac}}(-\Delta_g).
\end{equation*}
Thus
\begin{equation*}
    \sigma_{{\rm ac}}(-\Delta_g)= [\frac{(n-1)^2}{4} K_0,\infty  ).
\end{equation*}
It yields (1) of Theorems \ref{Thembedfinite} and \ref{Thembedcountable}.


Also,  by  (\ref{controlkr}) and \eqref{Gapr95}, we have
  (3) of Theorems \ref{Thembedfinite}, \ref{Thembedcountable} hold.

By \eqref{controlkr} and (\ref{eigenor}), it suffices to show that for any $j$, $w_{\lambda_j}(x) \in L^2([1,\infty),dx)$.

Below we give the details.

For any $N(k_0-1)<j\leq N(k_0)$, by the construction (see (\ref{eigenj})), we have for $k\geq k_0$
\begin{eqnarray*}
   ||\left(\begin{array}{c}
      w_{\lambda_j}(J_{k+1}) \\
      \frac{1}{\sqrt{\lambda_j}}w_{\lambda_j}^{\prime}(J_{k+1})
    \end{array}
    \right)|| &\leq& 2^{N(k+1)+1}  N(k+1)^{100} C_{k+1}^{-99}||\left(\begin{array}{c}
      w_{\lambda_j}(J_{k}) \\
     \frac{1}{\sqrt{\lambda_j}} w_{\lambda_j}^{\prime}(J_{k}) \end{array}
    \right)|| \nonumber\\
    &\leq&    C_{k+1}^{-50}||\left(\begin{array}{c}
      w_{\lambda_j}(J_{k}) \\
     \frac{1}{\sqrt{\lambda_j}} w_{\lambda}^{\prime}(J_{k}) \end{array}
    \right)||, \label{lastiteration}
\end{eqnarray*}
where the second  inequality holds  by \eqref{cnnov}.

This implies for $k\geq k_0$
\begin{equation}\label{lastG}
    ||\left(\begin{array}{c}
      w_{\lambda_j}(J_{k+1}) \\
      \frac{1}{\sqrt{\lambda_j}}w_{\lambda_j}^{\prime}(J_{k+1})
    \end{array}
    \right)||\leq  T_{k_0}^{50}T_{k+1}^{-50}||\left(\begin{array}{c}
      w_{\lambda_j}(J_{k_0}) \\
     \frac{1}{\sqrt{\lambda_j}} w_{\lambda_j}^{\prime}(J_{k_0}) \end{array}
    \right)||.
\end{equation}
By (\ref{Key1}) and  (\ref{Controlallx1}), for all $x\in[J_{k+1},J_{k+2}]$,
\begin{eqnarray}
   ||\left(\begin{array}{c}
      w_{\lambda_j}(x) \\
      \frac{1}{\sqrt{\lambda_j}}w_{\lambda_j}^{\prime}(x)
    \end{array}
    \right)|| &\leq&  2^{N(k+2)}||\left(\begin{array}{c}
      w_{\lambda_j}(J_{k+1}) \\
     \frac{1}{\sqrt{\lambda_j}} w_{\lambda_j}^{\prime}(J_{k+1}) \end{array}
    \right)||  \nonumber\\
   &\leq& 2^{N(k+2)}  T_{k_0}^{50}T_{k+1}^{-50}||\left(\begin{array}{c}
      w_{\lambda_j}(J_{k_0}) \\
     \frac{1}{\sqrt{\lambda_j}} w_{\lambda_j}^{\prime}(J_{k_0}) \end{array}
    \right)|| \nonumber \\
     &\leq&
     T_{k_0}^{50}T_{k+1}^{- 49}||\left(\begin{array}{c}
      w_{\lambda_j}(J_{k_0}) \\
     \frac{1}{\sqrt{\lambda_j}} w_{\lambda_j}^{\prime}(J_{k_0}) \end{array}
    \right)||,\label{lastGx}
\end{eqnarray}
where the third inequality holds by \eqref{cnnov}.

Then by  (\ref{lastGx}), we have
\begin{eqnarray*}
  \int_{J_{k_0+1}}^{\infty} ||\left(\begin{array}{c}
      w_{\lambda_j}(x) \\
      \frac{1}{\sqrt{\lambda_j}}w_{\lambda_j}^{\prime}(x)
    \end{array}
    \right)|| ^2 dx&=&\sum_{k\geq k_0+1} \int_{J_{k}}^{J_{k+1}} ||\left(\begin{array}{c}
      w_{\lambda_j}(x) \\
      \frac{1}{\sqrt{\lambda_j}}w_{\lambda_j}^{\prime}(x)
    \end{array}
    \right)|| ^2 dx \\
   &\leq& T_{k_0}^{100}||\left(\begin{array}{c}
      w_{\lambda_j}(J_{k_0}) \\
     \frac{1}{\sqrt{\lambda_j}} w_{\lambda_j}^{\prime}(J_{k_0}) \end{array}
    \right)||^2 \sum_{k\geq k_0+1}\int_{J_{k}}^{J_{k+1}}  T_{k}^{-98} dx\\
   &\leq& T_{k_0}^{100}||\left(\begin{array}{c}
      w_{\lambda_j}(J_{k_0}) \\
     \frac{1}{\lambda_j} w_{\lambda_j}^{\prime}(J_{k_0}) \end{array}
    \right)||^2 \sum_{k\geq k_0+1} N(k+1)T_{k+1}T_{k}^{-98} \\
   &\leq&  T_{k_0}^{100} ||\left(\begin{array}{c}
      w_{\lambda_j}(J_{k_0}) \\
     \frac{1}{\sqrt{\lambda_j}} w_{\lambda_j}^{\prime}(J_{k_0}) \end{array}
    \right)||^2\sum_{k\geq k_0+1}N(k+1)C_{k+1}T_{k}^{-96} \\
     &\leq&  T_{k_0}^{100}||\left(\begin{array}{c}
      w_{\lambda_j}(J_{k_0}) \\
     \frac{1}{\sqrt{\lambda_j}} w_{\lambda_j}^{\prime}(J_{k_0}) \end{array}  \right)||^2 \sum_{k\geq k_0+1}T_{k}^{-90} <\infty.
\end{eqnarray*}
This completes the proof.
\end{proof}
 \section*{Acknowledgments}
We are grateful to A. Mramor for sharing with us the review \cite{donn}, which led
to the idea of this project. W.L. would like to thank M. Lukic and H. Xu for some useful discussions.  W.L. was
supported by the AMS-Simons Travel Grant 2016-2018. This research was
 supported by NSF DMS-1401204 and  NSF DMS-1700314.


\footnotesize

\end{document}